\titleformat{\section}[block]
 {\bfseries}
 {\thesection.}
 {\fontdimen2\font}
 {}
\newcommand{\periodafter}[1]{#1.}
\titleformat{\subsection}[runin]
 {\bfseries}
 {\thesubsection.}
 {\fontdimen2\font}
 {\periodafter}
\setlist{topsep=1ex, noitemsep}
\newtheorem{theorem}{Theorem}[section]
\newtheorem{corollary}[theorem]{Corollary}
\newtheorem{proposition}[theorem]{Proposition}
\renewcommand{\emptyset}{\varnothing}
\numberwithin{equation}{section}
\DeclareMathOperator{\uhr}{\upharpoonright} 
\DeclareMathOperator\ssmap{\leadsto}
\DeclareMathOperator{\R}{\mathbb{R}}
\DeclareMathOperator{\N}{\mathbb{N}}
\DeclareMathOperator{\coz}{coz}
\DeclareMathOperator{\diam}{diam}
\providecommand{\germ}{\mathfrak}
\begin{document}

\title{Paracompactness and Open Relations}

\author{Valentin Gutev}

\curraddr{Department of Mathematics, Faculty of Science, University of
   Malta, Msida MSD 2080, Malta}
\email{\href{mailto:gutev@fmi.uni-sofia.bg}{gutev@fmi.uni-sofia.bg}}

\subjclass[2010]{54B10, 54C35, 54C60, 54C65, 54D20.}

\keywords{Paracompactness, relation, set-valued mapping, insertion, selection.}

\begin{abstract}
  The countably paracompact normal spaces were characterised by Dowker
  and Kat\v{e}tov in terms of an insertion property. Dowker also
  described them by normality of their product with the closed unit
  interval. Michael used the Dowker-Kat\v{e}tov insertion property to
  motivate his selection characterisation of these spaces. Morita
  extended in a natural way Dowker's product characterisation to all
  $\tau$-paracompact normal spaces. In this paper, we look at these
  results from the point of view of open relations. Insertions and
  selections are equivalent for such relations. Furthermore, we obtain
  a natural characterisation of $\tau$-paracompact normal spaces in
  terms of selections for convex-valued open relations. Based on this,
  we give simple alternative proofs of the above mentioned
  results. Other applications are obtained as well.
\end{abstract}

\date{\today}
\maketitle

\section{Introduction}
\label{sec:introduction}

In this paper, all topological spaces are assumed to be Hausdorff,
$w(Y)$ is the \emph{topological weight} of a space $Y$ and $d(Y)$ is
its \emph{density}. We will consider only real vector spaces. A vector
space $E$ is a \emph{topological vector space} if it is endowed with a
topology with respect to which the vector operations are
continuous.\medskip

For
an infinite cardinal number $\tau$, a space $X$ is
\emph{$\tau$-paracompact} \cite{zbMATH03161751,morita:60} if each open
cover of $X$ of cardinality $\leq\tau$, has a locally finite open
refinement. If $\omega$ is the first infinite ordinal, then
$\omega$-paracompactness is nothing else but countable
paracompactness. By definition, a space $X$ is paracompact if it is
$\tau$-paracompact for any cardinal number $\tau$. Paracompactness of
$X$ implies normality of $X$. However, there are many examples of
countably paracompact spaces which are not normal. Also, there are
simple examples of $\tau$-paracompact spaces which are not
$\tau^+$-paracompact \cite{morita:60}, where the cardinal $\tau^+$ is
the immediate successor of $\tau$.\medskip

For a relation ${\Phi\subset X\times Y}$, the set
$\Phi(x)=\{y\in Y: \langle x,y\rangle\in \Phi\}$ is the \emph{image}
of a point $x\in X$ by $\Phi$. More generally, the \emph{image} of a
subset $A\subset X$ by $\Phi$ is the set
$\Phi[A]=\bigcup_{x\in A}\Phi(x)$. The \emph{inverse} $\Phi^{-1}$ of a
relation $\Phi\subset X\times Y$ is the relation
$\Phi^{-1}=\{\langle y,x\rangle: \langle x,y\rangle\in \Phi\}\subset
Y\times X$. In these terms, the image $\Phi^{-1}[B]$ of a subset
$B\subset Y$ by the inverse relation $\Phi^{-1}$ is the
\emph{preimage} of $B$ by the relation $\Phi$; similarly,
$\Phi^{-1}(y)$ is the preimage of a point $y\in Y$ by $\Phi$. If
$\Phi^{-1}[Y]=X$, then $\Phi$ represents a map from $X$ to the
\emph{nonempty} subsets of $Y$ and we will simply write
$\Phi:X\ssmap Y$ to designate this property. In Michael's selection
theory, such a relation $\Phi:X\ssmap Y$ is commonly called a
\emph{set-valued mapping} (also a \emph{multifunction}, or simply a
\emph{carrier} \cite{michael:56a}). Finally, let us recall that a map
$f:X\to Y$ is a \emph{selection} (or a \emph{single-valued selection})
for $\Phi:X\ssmap Y$ if $f(x)\in\Phi(x)$ for every $x\in X$.\medskip

In this paper, we will broadly use the term ``relation'' which offers
the flexibility to identify a set-valued mapping with its graph. This
natural convention has the benefit that any adjective which can be
applied to sets (e.g. ``open'' or ``closed'', etc.) can be applied
automatically to set-valued mappings. Another benefit is that all
covers of $X$ indexed by the elements of $Y$ are precisely the
relations $\Phi:X\ssmap Y$, see \cite{MR3673071}. In this
interpretation, the \emph{indexed cover} identified with
$\Phi:X\ssmap Y$ is simply the inverse relation
$Y\ni y\to \Phi^{-1}(y)\subset X$. In contrast to open relations,
topological properties of covers of $X$ do not depend on $Y$ which is
only used as an indexed set. Namely, each open relation
$\Phi:X\ssmap Y$ is an open set in $X\times Y$ and, therefore,
$\Phi$ represents an open cover of $X$ because each preimage 
$\Phi^{-1}(y)=\Phi\cap X\times\{y\}$, $y\in Y$, is open $X$. However,
if $Y$ is a non-discrete space and $X$ is the set $Y$ endowed with the
discrete topology, then the identity relation $\Phi=\left\{\langle
  y,y\rangle: y\in Y\right\}$ is an open cover of $X$ which is not an
open relation in $X\times Y$. In fact, one can easily see that
$\Phi:X\ssmap Y$ represents an open cover of $X$ precisely when it is
an l.s.c.\ mapping (i.e.\ relation) with respect to any topology on
$Y$. \medskip

In the next section, we will prove the following theorem.

\begin{theorem}
  \label{theorem-Count-Par-v2:1}
  For a space $X$, the following are equivalent\textup{:}
  \begin{enumerate}
  \item\label{item:OG-Paracompact-v20:1} $X$ is $\tau$-paracompact and
    normal.
  \item\label{item:OG-Paracompact-v20:2} If $E$ is a topological
    vector space with $d(E)\leq\tau$, then each convex-valued open
    relation ${\Phi:X\ssmap E}$ has a continuous selection.
  \end{enumerate}
\end{theorem}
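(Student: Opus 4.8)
The plan is to prove the two implications separately, with the forward direction \ref{item:OG-Paracompact-v20:1}$\Rightarrow$\ref{item:OG-Paracompact-v20:2} carrying the constructive heart of the argument and the reverse direction splitting into an easy normality part and a delicate $\tau$-paracompactness part.

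For \ref{item:OG-Paracompact-v20:1}$\Rightarrow$\ref{item:OG-Paracompact-v20:2}, fix a convex-valued open relation $\Phi\colon X\ssmap E$ with $d(E)\le\tau$, and let $D\subset E$ be dense with $|D|\le\tau$. The key point is that openness of $\Phi$ makes everything work in one step, without any of the approximation-and-completion machinery of Michael's classical proof. First I would observe that each preimage $\Phi^{-1}(e)=\{x:\langle x,e\rangle\in\Phi\}$ is open in $X$ (it is the preimage of the open set $\Phi$ under $x\mapsto\langle x,e\rangle$), and that $\{\Phi^{-1}(e):e\in D\}$ is an open cover of $X$ of cardinality $\le\tau$: for each $x$ the value $\Phi(x)$ is open and nonempty, so it meets the dense set $D$. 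Since $X$ is $\tau$-paracompact and normal, this cover admits a locally finite partition of unity $\{\varphi_e:e\in D\}$ with $\varphi_e$ supported in $\Phi^{-1}(e)$. I would then set $f(x)=\sum_{e\in D}\varphi_e(x)\,e$. Local finiteness makes this a locally finite sum, hence $f$ is continuous into $E$; and since $\sum_e\varphi_e(x)=1$ with $\varphi_e(x)>0$ only when $e\in\Phi(x)$, the point $f(x)$ is a finite convex combination of elements of the convex set $\Phi(x)$, so $f(x)\in\Phi(x)$. Thus $f$ is the desired selection.

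For \ref{item:OG-Paracompact-v20:2}$\Rightarrow$\ref{item:OG-Paracompact-v20:1}, normality is immediate from the case $E=\R$ (which has density $1\le\tau$). Given disjoint closed sets $A,B\subset X$, I would take the open relation $\Phi=(X\times\R)\setminus\big[(A\times[1,\infty))\cup(B\times(-\infty,1])\big]$, whose values are $(-\infty,1)$ on $A$, $(1,\infty)$ on $B$, and all of $\R$ elsewhere; these are open, convex and nonempty, so \ref{item:OG-Paracompact-v20:2} supplies a continuous selection $f$. Then $f^{-1}\big((-\infty,1)\big)$ and $f^{-1}\big((1,\infty)\big)$ are disjoint open sets containing $A$ and $B$. (More generally, interval-valued open relations $X\ssmap\R$ correspond exactly to pairs $g<h$ with $g$ upper and $h$ lower semicontinuous, and their selections to strict insertions $g<f<h$, which is how the Dowker--Kat\v{e}tov picture will re-emerge.)

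The main obstacle is the $\tau$-paracompactness half of \ref{item:OG-Paracompact-v20:2}$\Rightarrow$\ref{item:OG-Paracompact-v20:1}. Given an open cover $\{U_\alpha:\alpha<\tau\}$, the natural attempt is to find a space $E$ of density $\le\tau$ (for instance $\ell_1(\tau)$) and a convex-valued open relation whose selection $f=(f_\alpha)$ satisfies $f_\alpha\ge0$, $\sum_\alpha f_\alpha=1$, and $f_\alpha=0$ off $U_\alpha$, so that $\{f_\alpha^{-1}(0,\infty)\}$ is a subordinate open refinement. The difficulty is a genuine tension between the two hypotheses on $\Phi$: encoding ``$f_\alpha\le0$ for every $\alpha$ with $x\notin U_\alpha$'' requires intersecting $\Phi(x)$ with infinitely many half-spaces, which destroys openness of the values, whereas making the values open forces an ``$\exists\alpha$'' clause that destroys convexity. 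Reconciling openness, convexity and---above all---\emph{local} finiteness is therefore the crux. I expect to resolve it by choosing the topology on $E$ (a suitable locally convex topology of density $\le\tau$ on the finitely supported family $\R^{(\tau)}$, rather than the $\ell_1$-norm) so that a continuous selection is automatically locally finitely supported, and/or by first extracting from a selection a point-finite subordinate partition of unity and then upgrading it to a locally finite open refinement using the normality already established, shrinking in the style of Michael. The case $E=\R$ already delivers countable paracompactness via the insertion above, which suggests that the general $\tau$ should follow either by an inductive scheme on the cover or by a Morita-type product-normality bootstrap, and getting this last step to run cleanly is where I anticipate the real work.
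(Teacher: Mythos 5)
Your forward implication and your normality argument are correct and essentially the paper's own: a dense set $D\subset E$ with $|D|\leq\tau$ meets every (open, nonempty) value $\Phi(x)$, so the open preimages $\Phi^{-1}(e)$, $e\in D$, cover $X$; a locally finite partition of unity index-subordinated to this cover gives the selection $f(x)=\sum_{e\in D}\varphi_e(x)\cdot e$ by convexity of the values; and the two-closed-sets relation into $\R$ gives normality exactly as in the paper's Proposition \ref{proposition-OG-Paracompact-v21:1}.

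The genuine gap is precisely where you say you ``anticipate the real work'': the $\tau$-paracompactness half of \ref{item:OG-Paracompact-v20:2}$\Rightarrow$\ref{item:OG-Paracompact-v20:1} is never proved. You diagnose the obstruction correctly (subordination to $\tau$-many sets is a conjunction of infinitely many conditions, which destroys openness of values, while an existential clause destroys convexity), but none of your three escape routes is carried out, and as stated each is doubtful: no locally convex topology on $\R^{(\tau)}$ of density $\leq\tau$ forcing selections to be locally finitely supported is exhibited; the upgrade from point-finite to locally finite refinements is not a consequence of normality alone (the Michael--Nagami argument needs collectionwise normality, and you have not even extracted a point-finite structure from a selection); and a ``Morita-type product-normality bootstrap'' would be circular in this paper, since Morita's product theorem is later \emph{derived from} this theorem. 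The paper dissolves the tension with two ingredients your proposal lacks. First, by Mack's theorem \cite[Theorem 5]{MR0211382} it suffices to refine \emph{increasing} open covers $\Omega:X\ssmap\tau$; monotonicity collapses ``subordinated to the cover'' to a single condition on $\alpha(x)=\min\Omega(x)$. Second, it takes $E=C_0(\tau)$, the sup-normed space of continuous functions vanishing at infinity on the ordinal space $\tau$ (locally compact, so $w(C_0(\tau))\leq\tau$), and sets $\Phi(x)=\{y\in C_0(\tau): y(\alpha)>2 \text{ for all } \alpha\leq\alpha(x)\}$. Because the initial segment $[0,\alpha(x)]$ is compact and clopen, this intersection of infinitely many strict conditions is nonempty, convex and open, and openness of $\Phi$ as a subset of $X\times C_0(\tau)$ follows from the monotonicity of $\Omega$. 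A continuous selection $f:X\to C_0(\tau)$ then pulls back a locally finite open cover $\mathscr{V}$ of $X$ with $\diam f(V)\leq 1$ (using paracompactness of the metrizable range), and the $C_0$-condition forces each $V\in\mathscr{V}$ into some $\Omega^{-1}(\alpha)$: if $|f(x)(\alpha)|<1$ for some $x\in V$ while $p\in V\setminus\Omega^{-1}(\alpha)$, then $f(p)(\alpha)>2$, contradicting $\diam f(V)\leq1$. Mack's theorem then yields $\tau$-paracompactness. Without something playing the role of these two steps, your proof establishes only normality plus (modulo the Dowker--Kat\v{e}tov theorem, which you invoke but do not prove) countable paracompactness, not $\tau$-paracompactness.
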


Theorem \ref{theorem-Count-Par-v2:1} is well known in the setting of
closed-convex-valued l.s.c.\ mappings, see Michael's characterisation
of paracompact spaces in \cite[Theorem 3.2$''$]{michael:56a} and
Ishii's refinement for $\tau$-paracompact spaces in \cite[Theorem
1]{MR0149440}. Each open relation is both l.s.c.\ and open-valued, but
\ref{item:OG-Paracompact-v20:2} of Theorem
\ref{theorem-Count-Par-v2:1} is not true if $\Phi$ is only assumed to
be l.s.c.\ and open-convex-valued, see \cite[Example
6.3]{michael:56a}. However, it remains valid if the range $E$ is
assumed to be a normed space and l.s.c.\ is strengthened to
``metric''-l.s.c., see Corollary \ref{corollary-Open-Rel-v9:1}. On the
other hand, most of the familiar selection theorems for l.s.c.\
mappings are implicitly based on open relations. In Section
\ref{sec:select-l.s.c.-relat}, we will explicitly illustrate this
relationship showing how open relations are used for the construction
of continuous selections for l.s.c.\ mappings and vise versa, see
Theorem \ref{theorem-Open-Rel-v4:2}. The selection problem for l.s.c.\
mappings is naturally related to the extension problem. The open
relations do not have this feature, but using some extension theorems
we can still continuously extend partial selections of such relations,
see Corollary \ref{corollary-Open-Rel-v5:1} (based on Dowker's
extension theorem \cite{dowker:52}) and Corollary
\ref{corollary-Open-Rel-v5:2} (based on Dugundji's extension theorem
\cite{dugundji:51}).\medskip

Open relations are very efficient to obtain several classical results
for paracompact spaces from a common point of view, and simplify
significantly their proofs. Most of these applications follow easily
from Theorem \ref{theorem-Count-Par-v2:1} by constructing open
relations in various function spaces. Briefly, in Section
\ref{sec:select-insert-prod} we will show that the Dowker-Kat\v{e}tov
insertion characterisation of countably paracompact normal spaces
\cite{dowker:51,katetov:51,MR0060211} is identical to a similar
selection characterisation of these spaces obtained by Michael in
\cite[Theorem 3.1$''$]{michael:56a}, see Theorem
\ref{theorem-Count-Par-v14:1}. In Section
\ref{sec:selections-products}, we will give a very simple proof of the
classical result of Morita \cite[Theorem 2.4]{morita:60} about
products of $\tau$-paracompact normal spaces with the Tychonoff cube
of $\tau$-many closed unit intervals, see Theorems
\ref{theorem-Open-Rel-vgg-2nd:1} and \ref{theorem-prod-dis-v6:2}.

\section{Selections and Open Relations}
\label{sec:select-open-graph}

In this section, we will prove Theorem
\ref{theorem-Count-Par-v2:1}. The following property is a part of this
proof, and will be also used in other proofs. 

\begin{proposition}
  \label{proposition-OG-Paracompact-v21:1}
  Let $X$ be a space such that every convex-valued open relation
  $\Phi:X\ssmap \R$ has a continuous selection. Then $X$ is normal. 
\end{proposition}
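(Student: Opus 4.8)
The plan is to deduce normality directly from its definition. Given two disjoint closed sets $A,B\subset X$, I would manufacture a convex-valued open relation $\Phi:X\ssmap\R$ whose every continuous selection separates $A$ from $B$; feeding $\Phi$ into the hypothesis then produces the desired disjoint open sets.

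Concretely, I would define
\[
  \Phi(x)=
  \begin{cases}
    (-\infty,0) & \text{if } x\in A,\\
    (0,+\infty) & \text{if } x\in B,\\
    \R & \text{if } x\in X\setminus(A\cup B).
  \end{cases}
\]
Each value is a nonempty open interval, so $\Phi$ is convex-valued and is a genuine mapping $\Phi:X\ssmap\R$.

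The crux is to check that $\Phi$, viewed as a subset of $X\times\R$, is open, which I would verify pointwise. If $x_0\in A$ and $t_0<0$, then $X\setminus B$ is an open neighbourhood of $x_0$ on which $\Phi$ takes values containing $(-\infty,0)$ (namely $(-\infty,0)$ on $A$ and $\R$ off $A\cup B$), so $(X\setminus B)\times(-\infty,0)$ is a neighbourhood of $\langle x_0,t_0\rangle$ inside $\Phi$. The case $x_0\in B$, $t_0>0$ is symmetric via $X\setminus A$, and if $x_0\notin A\cup B$ then $(X\setminus(A\cup B))\times\R\subseteq\Phi$ is already an open neighbourhood. The step worth flagging is precisely this boundary behaviour: the argument works because off $A\cup B$ the relation equals all of $\R$, so nearby points never fall outside $\Phi$.

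Finally, I would invoke the hypothesis to obtain a continuous selection $f:X\to\R$ of $\Phi$. Then $f(x)<0$ for every $x\in A$ and $f(x)>0$ for every $x\in B$, so $A\subset f^{-1}((-\infty,0))$ and $B\subset f^{-1}((0,+\infty))$ are disjoint open sets separating $A$ and $B$. Hence $X$ is normal.
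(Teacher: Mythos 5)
Your proof is correct and follows essentially the same route as the paper: the paper defines the same three-piece relation (with values $(-\infty,-1)$, $(1,+\infty)$, and $\R$ instead of your $(-\infty,0)$, $(0,+\infty)$, $\R$) and separates $A$ and $B$ via a continuous selection. Your explicit verification that $\Phi$ is open as a subset of $X\times\R$ fills in a detail the paper dismisses as evident, and your choice of thresholds changes nothing of substance.
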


\begin{proof}
  Take disjoint closed sets $A,B\subset X$ and define a convex-valued
  open relation $\Phi:X\ssmap \R$ by $ \Phi(x)=(-\infty,-1)$ if
  $x\in A$; $ \Phi(x)=(1,+\infty)$ if $x\in B$; and ${\Phi(x)=\R}$
  otherwise. Evidently, $A$ and $B$ are completely separated by any
  continuous selection for $\Phi$. Therefore, $X$ is normal.
\end{proof}

The \emph{cozero set}, or the \emph{set-theoretic support}, of a
function $\xi:X\to \R$ is the set $\coz(\xi)=\{x\in X:\xi(x)\neq 0\}$.
A collection $\xi_\alpha:X\to [0,1]$, $\alpha\in \mathscr{A}$, of
continuous functions on a space $X$ is a \emph{partition of unity} if
$\sum_{\alpha\in \mathscr{A}}\xi_\alpha(x)=1$, for each $x\in X$. A
partition of unity $\{\xi_\alpha:\alpha\in \mathscr{A}\}$ is
\emph{locally finite} if $\{\coz(\xi_\alpha): \alpha\in \mathscr{A}\}$
is a locally finite cover of $X$, and it is \emph{index-subordinated}
to a cover $\Omega:X\ssmap \mathscr{A}$ of $X$ if
$\coz(\xi_\alpha)\subset \Omega^{-1}(\alpha)$ for every
$\alpha\in \mathscr{A}$.  The following theorem is well known, it is a
consequence of Urysohn's characterisation of normality
\cite{MR1512258} and the Lefschetz lemma \cite{MR0007093}.

\begin{theorem}
  \label{theorem-Open-Rel-v4:1}
  Every locally finite open cover of a normal space has a partition of
  unity index-subordinated to it.
\end{theorem}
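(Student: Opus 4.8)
The plan is to combine a shrinking of the cover (the Lefschetz lemma) with Urysohn functions, and then normalise. Write the given cover as $\Omega:X\ssmap \mathscr{A}$, so that the sets $U_\alpha=\Omega^{-1}(\alpha)$, $\alpha\in\mathscr{A}$, form a locally finite open cover of $X$.

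First I would invoke the Lefschetz lemma (the shrinking lemma for point-finite open covers of normal spaces) to obtain an open cover $\{V_\alpha:\alpha\in\mathscr{A}\}$ of $X$ with $\overline{V_\alpha}\subset U_\alpha$ for every $\alpha$. Since $\overline{V_\alpha}$ and $X\setminus U_\alpha$ are then disjoint closed subsets of $X$, Urysohn's characterisation of normality yields a continuous function $g_\alpha:X\to[0,1]$ with $g_\alpha\equiv 1$ on $\overline{V_\alpha}$ and $g_\alpha\equiv 0$ on $X\setminus U_\alpha$. In particular $\coz(g_\alpha)\subset U_\alpha$, so the family $\{\coz(g_\alpha):\alpha\in\mathscr{A}\}$ is locally finite because $\{U_\alpha\}$ is.

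Next, set $g(x)=\sum_{\alpha\in\mathscr{A}}g_\alpha(x)$. The crucial step is that this sum defines a continuous function: by local finiteness each point of $X$ has a neighbourhood on which all but finitely many $g_\alpha$ vanish, so $g$ is locally a finite sum of continuous functions and is therefore continuous. Moreover $g$ is strictly positive, since every $x\in X$ lies in some $V_\alpha$, whence $g_\alpha(x)=1$ and $g(x)\geq 1$.

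Finally, I would define $\xi_\alpha=g_\alpha/g$ for each $\alpha\in\mathscr{A}$. Because $g$ is continuous and positive, each $\xi_\alpha:X\to[0,1]$ is continuous; one has $\sum_{\alpha\in\mathscr{A}}\xi_\alpha(x)=1$ for every $x\in X$, and $\coz(\xi_\alpha)=\coz(g_\alpha)\subset U_\alpha=\Omega^{-1}(\alpha)$, so the resulting partition of unity is index-subordinated to $\Omega$, with local finiteness inherited from that of $\{\coz(g_\alpha)\}$. I expect the continuity of the sum $g$ to be the only genuinely delicate point, and it rests entirely on the local finiteness hypothesis; everything else is a routine application of the two cited results.
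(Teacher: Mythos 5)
Your proposal is correct and is exactly the argument the paper has in mind: the paper states this theorem without proof, attributing it to Urysohn's lemma and the Lefschetz (shrinking) lemma, and your write-up simply fills in that standard outline (shrink the locally finite cover, take Urysohn functions $g_\alpha$ supported in the $\Omega^{-1}(\alpha)$, and normalise by the locally finite sum $g=\sum_\alpha g_\alpha\geq 1$). All the steps, including the continuity of $g$ via local finiteness and the inclusion $\coz(\xi_\alpha)\subset\Omega^{-1}(\alpha)$, are sound.
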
                                        

For a metric $\rho$ on $Y$ and $\varepsilon > 0$, we will use
$\mathbf{O}_\varepsilon(p)=\{y\in Y:\rho(y,p)<\varepsilon\}$ for the
\emph{open $\varepsilon$-ball} centred at a point $p\in Y$.  For a
space $T$, let $C_0(T)$ be the vector space of all continuous
functions $y:T\to \R$ such that the set
$\{t\in T: |y(t)|\geq \varepsilon\}$ is compact for each
$\varepsilon>0$. The linear operations in $C_0(T)$ are defined
pointwise and it is equipped with the sup-norm
$\|y\|=\sup_{t\in T}|y(t)|$, $y\in C_0(T)$.

\begin{proof}[Proof of Theorem \ref{theorem-Count-Par-v2:1}]
  Let $X$ be a $\tau$-paracompact normal space, $E$ be a topological
  vector space with $d(E)\leq \tau$, and $\Phi:X\ssmap E$ be a
  convex-valued open relation. To construct a continuous selection
  $f:X\to E$ for $\Phi$, take a dense set $\mathscr{A}\subset E$ with
  $|\mathscr{A}|\leq \tau$, and set
  $\Omega=\Phi\cap X\times\mathscr{A}$. Then
  $\Omega:X\ssmap \mathscr{A}$ is an open cover of $X$ (being an open
  relation), because $X\times\mathscr{A}$ is dense in $X\times E$ and
  $\Phi\subset X\times E$ is open.  Since $|\mathscr{A}|\leq\tau$ and
  $X$ is $\tau$-paracompact and normal, by Theorem
  \ref{theorem-Open-Rel-v4:1}, $X$ has a locally finite partition of
  unity $\{\xi_\alpha:\alpha\in \mathscr{A}\}$ which is
  index-subordinated to $\Omega:X\ssmap \mathscr{A}$, i.e.\ with the
  property that $\xi_\alpha(x)\neq 0$ implies
  $\alpha\in\Omega(x)\subset \Phi(x)$. We can now set
  $f(x)=\sum_{\alpha\in\mathscr{A}}\xi_\alpha(x)\cdot \alpha$, and it
  is trivial to check that this $f:X\to E$ is as required because the
  vector operations in $E$ are continuous and $\Phi$ is
  convex-valued. \smallskip

  Conversely, let $X$ be as in \ref{item:OG-Paracompact-v20:2} of
  Theorem \ref{theorem-Count-Par-v2:1}. Then by Proposition
  \ref{proposition-OG-Paracompact-v21:1}, it is normal. To show that
  $X$ is also $\tau$-paracompact, we argue in a similar way as in the
  proof of \cite[Theorem 5.7]{MR1961298}. Namely, take an
  \emph{increasing} open cover $\Omega:X\ssmap \tau$ of $X$, i.e.\
  such that $\Omega^{-1}(\alpha)\subset \Omega^{-1}(\beta)$ for every
  $\alpha<\beta<\tau$. The proof is now based on the space
  $C_0(\tau)$, where the initial ordinal $\tau$ is equipped with the
  usual order topology. Since the ordinal space $\tau$ is locally
  compact, we have that $w(C_0(\tau))\leq \tau$. In this setting, for
  each $x\in X$, let
  \begin{equation}
    \label{eq:OG-Paracompact-v21:1}
    \alpha(x)=\min\Omega(x)\quad \text{and}\quad 
    \Phi(x)=\left\{y\in C_0(\tau): \min_{\alpha\leq \alpha(x)}
      y(\alpha)>2\right\}. 
  \end{equation}
  Then $\Phi(x)\neq \emptyset$ because the (compact) set
  $\{\alpha<\tau: \alpha\leq \alpha(x)\}$ is clopen in $\tau$.
  Moreover, $\Phi(x)$ is convex because for $y_1,y_2\in \Phi(x)$ there
  is $\delta>2$ such that
  $y_1(\alpha),y_2(\alpha)\in [\delta,+\infty)$ for every
  $\alpha\leq \alpha(x)$. The resulting relation
  $\Phi:X\ssmap C_0(\tau)$ is also open. Indeed, take
  $\langle x_0,y_0\rangle\in \Phi$ and set
  ${\varepsilon=\min_{\alpha\leq\alpha(x_0)}\frac{y_0(\alpha)-2}2>0}$.
  Then $\mathbf{O}_\varepsilon(y_0)\subset\Phi(x_0)$.  Furthermore, if
  $\langle x,y\rangle\in \Omega^{-1}(\alpha(x_0))\times
  \mathbf{O}_\varepsilon(y_0)$, then \eqref{eq:OG-Paracompact-v21:1}
  implies that $y\in \Phi(x)$ because $\alpha(x)\leq \alpha(x_0)$ and
  $y\in \Phi(x_0)$, so 
  $\Omega^{-1}(\alpha(x_0))\times \mathbf{O}_\varepsilon(y_0)\subset
  \Phi$.  Thus, by hypothesis, $\Phi$ has a continuous selection
  ${f:X\to C_0(\tau)}$. Since $C_0(\tau)$ is paracompact (being
  metrizable) and $f$ is continuous, $X$ has a locally finite open
  cover $\mathscr{V}$ such that $\diam f(V)\leq 1$ for every
  $V\in \mathscr{V}$. As in the proof of \cite[Theorem
  5.7]{MR1961298}, this implies that $\mathscr{V}$ is a refinement of
  the cover $\Omega:X\ssmap \tau$. Briefly, for fixed
  $x\in V\in \mathscr{V}$, there is $\alpha<\tau$ with
  $|f(x)(\alpha)|<1$. For this $\alpha<\tau$, we now have that
  $V\subset \Omega^{-1}(\alpha)$. Indeed, if there is
  $p\in V\setminus \Omega^{-1}(\alpha)$, then
  \eqref{eq:OG-Paracompact-v21:1} implies that $\alpha(p)>\alpha$. For
  the same reason, $f(p)(\alpha)>2$ because $f(p)\in
  \Phi(p)$. Accordingly,
  $\|f(x)-f(p)\|\geq |f(x)(\alpha)-f(p)(\alpha)|>1$ which is clearly
  impossible because $\diam f(V)\leq 1$. Thus, $\mathscr{V}$ refines
  $\Omega:X\ssmap \tau$ and by a result of Mack \cite[Theorem
  5]{MR0211382}, $X$ is $\tau$-paracompact as well.
\end{proof}

For spaces $X$ and $Y$, a relation $\Phi:X\ssmap Y$ is \emph{lower
  semi-continuous}, or l.s.c., if the set $\Phi^{-1}[U]$ is open in
$X$ for every open $U\subset Y$. Each open cover $\Phi:X\ssmap Y$ of
$X$ is l.s.c.\ because $\Phi^{-1}[U]=\bigcup_{y\in U}\Phi^{-1}(y)$ is
open for every $U\subset Y$, but the converse is not true. For a
metric space $(Y,\rho)$, a relation $\Phi:X\ssmap Y$ is
\emph{$\rho$-l.s.c.}\ if for every $\varepsilon >0$, each point
$p\in X$ is contained in an open set $V\subset X$ such that
$\Phi(p)\subset \mathbf{O}_\varepsilon( \Phi ( x))$ for every
$x\in V$.  Each $\rho$-l.s.c.\ relation is l.s.c.\ but, again, the
converse fails.\medskip

As mentioned in the Introduction, \ref{item:OG-Paracompact-v20:2} of
Theorem \ref{theorem-Count-Par-v2:1} is not true if $\Phi$ is only
assumed to be l.s.c.\ and open-convex-valued. However, it remains
valid for $\rho$-l.s.c.\ open-convex-valued relations because such
relations are open. This is based on a property obtained in
\cite[Lemma 8.6]{michael:57} and credited to V. L. Klee.

\begin{corollary}
  \label{corollary-Open-Rel-v9:1}
  Let $X$ be a $\tau$-paracompact normal space, $E$ be a normed space
  with $d(E)\leq\tau$, and $\rho$ be the metric on $E$ generated by
  the norm of $E$.  Then each open-convex-valued $\rho$-l.s.c.\
  relation ${\Phi:X\ssmap E}$ has a continuous selection.
\end{corollary}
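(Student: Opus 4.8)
The plan is to show that the stated hypotheses force $\Phi$ to be an open subset of $X\times E$; once this is established, the conclusion is immediate from Theorem \ref{theorem-Count-Par-v2:1} in the direction \ref{item:OG-Paracompact-v20:1}$\Rightarrow$\ref{item:OG-Paracompact-v20:2}, because $E$ is in particular a topological vector space with $d(E)\leq\tau$, the space $X$ is $\tau$-paracompact and normal, and the values of $\Phi$ are convex. Thus the entire content of the corollary is concentrated in the single reduction ``open-convex-valued $+$ $\rho$-l.s.c.\ $\Rightarrow$ open relation'', exactly as flagged in the paragraph preceding the statement.

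The tool for that reduction is the lemma of Klee recalled in \cite[Lemma 8.6]{michael:57}: if $A\subset E$ is convex, $y\in E$ and $0<\varepsilon<\delta$ satisfy $\mathbf{O}_\delta(y)\subset\mathbf{O}_\varepsilon(A)$, then $\mathbf{O}_{\delta-\varepsilon}(y)\subset A$. To apply it, I would fix an arbitrary $\langle x_0,y_0\rangle\in\Phi$. Since $\Phi(x_0)$ is open, convex and contains $y_0$, there is $\varepsilon>0$ with $\mathbf{O}_{2\varepsilon}(y_0)\subset\Phi(x_0)$. Next, using $\rho$-l.s.c.\ with this very $\varepsilon$, the point $x_0$ has an open neighbourhood $V\subset X$ such that $\Phi(x_0)\subset\mathbf{O}_\varepsilon(\Phi(x))$ for every $x\in V$.

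Combining the two inclusions gives $\mathbf{O}_{2\varepsilon}(y_0)\subset\mathbf{O}_\varepsilon(\Phi(x))$ for each $x\in V$; since every value $\Phi(x)$ is convex, Klee's lemma (with $\delta=2\varepsilon$) then yields $\mathbf{O}_\varepsilon(y_0)\subset\Phi(x)$ for all $x\in V$. Hence $V\times\mathbf{O}_\varepsilon(y_0)$ is an open neighbourhood of $\langle x_0,y_0\rangle$ contained in $\Phi$, and as this point was arbitrary, $\Phi$ is open in $X\times E$. Invoking Theorem \ref{theorem-Count-Par-v2:1} on the convex-valued open relation $\Phi:X\ssmap E$ supplies the required continuous selection.

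I expect the only genuine difficulty to be the openness step, and within it the correct bookkeeping of radii: the ball furnished by openness of $\Phi(x_0)$ must be taken of radius $2\varepsilon$ (strictly larger than the $\rho$-l.s.c.\ tolerance $\varepsilon$) so that Klee's lemma returns a ball of positive radius about $y_0$. The conceptual point is that convexity of the values is precisely what licenses the passage from ``$\Phi(x)$ is $\varepsilon$-dense in a ball about $y_0$'' to ``$\Phi(x)$ genuinely contains a smaller ball about $y_0$''; everything after openness is a direct citation of Theorem \ref{theorem-Count-Par-v2:1}.
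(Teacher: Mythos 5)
Your proposal is correct and follows essentially the same route as the paper: both reduce the corollary to showing that an open-convex-valued $\rho$-l.s.c.\ relation is an open subset of $X\times E$ via Klee's lemma (\cite[Lemma 8.6]{michael:57}), and then invoke Theorem \ref{theorem-Count-Par-v2:1}. The only differences are cosmetic bookkeeping of radii (the paper uses tolerance $\frac\delta2$ against a $2\delta$-ball and applies the lemma pointwise, while you use tolerance $\varepsilon$ against a $2\varepsilon$-ball and the ball-valued form of the lemma), which are equivalent.
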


\begin{proof}
  Take $\langle p,q\rangle\in \Phi\subset X\times E$. Since $\Phi(p)$
  is open, there exists $\delta>0$ such that
  $\mathbf{O}_{2\delta}(q)\subset \Phi(p)$. Moreover, since $\Phi$ is
  $\rho$-l.s.c., $p$ is contained in an open set $V\subset X$ such
  that $\Phi(p)\subset \mathbf{O}_{\frac\delta2}(\Phi(x))$ for every
  $x\in V$. This implies that
  $V\times \mathbf{O}_\delta(q)\subset \Phi$. Indeed, if
  $\langle x,y\rangle\in V\times \mathbf{O}_\delta(q)$, then
  $\mathbf{O}_\delta(y)\subset \mathbf{O}_{2\delta}(q)\subset
  \Phi(p)\subset \mathbf{O}_{\frac\delta2}(\Phi(x))$. Hence, by
  \cite[Lemma 8.6]{michael:57}, $y\in \Phi(x)$. Thus, $\Phi$ is an
  open relation and by Theorem \ref{theorem-Count-Par-v2:1}, it has a
  continuous selection.
\end{proof}

Regarding the proper place of Corollary \ref{corollary-Open-Rel-v9:1},
the interested reader is referred to \cite[Theorem 8.5]{michael:57}
where Michael showed that for a space $X$ and a normed space $E$, each
open-convex-valued $\rho$-continuous relation $\Phi:X\ssmap E$ has a
continuous selection. Here, $\rho$-continuity is meant with respect to
the \emph{Hausdorff distance} generated by the metric $\rho$ on $E$
corresponding to the norm of $E$. As he remarked in \cite{michael:57},
the role of $\rho$-continuity is to reduce this selection problem to a
paracompact domain $X$. However, each $\rho$-continuous relation is
$\rho$-l.s.c. Hence, Corollary \ref{corollary-Open-Rel-v9:1} gives a
natural explanation for this selection property.

\section{Selections and L.s.c.\ Relations}
\label{sec:select-l.s.c.-relat}

In the setting of a metrizable range, each closed-valued l.s.c.\
mapping is a countable intersection of open relations or, in other
words, a $G_\delta$-relation. This follows from the following
considerations. For $\Phi\subset X\times Y$ and
$\Psi\subset Y\times Z$, the \emph{composite} relation
$\Psi\circ \Phi\subset X\times Z$ is defined~by
\[
\Psi\circ \Phi=\big\{\langle x,z\rangle \in X\times Z: \langle
x,y\rangle \in \Phi\ \text{and}\ \langle y,z\rangle\in\Psi\ \text{for
  some $y\in Y$}\big\}.
\]
Evidently, $\Psi\circ \Phi(x)=\Psi[\Phi(x)]$ for each $x\in X$. If
$V\times\{q\}\subset \Phi$ and $\{q\}\times W\subset \Psi$ for some
$V\subset X$, $q\in Y$ and $W\subset Z$, then
$V\times W\subset \Psi\circ \Phi$. Thus, the composition of open
relations is also an open relation. In fact, this is true in the
following more general situation.

\begin{proposition}
  \label{proposition-Open-Rel-v4:1}
  Let $X$, $Y$ and $Z$ be spaces, $\Phi:X\ssmap Y$ be l.s.c.\ and
  $\Psi:Y\ssmap Z$ be an open relation. Then the composite
  relation $\Psi\circ\Phi:X\ssmap Z$ is also open.
\end{proposition}

\begin{proof}
  Follows from the fact that
  $\Phi^{-1}[U]\times V\subset \Psi\circ \Phi$, whenever
  $U\times V\subset \Psi$.
\end{proof} 

For a metric space $(Y,\rho)$ and $\varepsilon>0$, the open
$\varepsilon$-balls form the open symmetric relation
$\mathbf{O}_\varepsilon=\{\langle y,z\rangle:
d(y,z)<\varepsilon\}\subset Y\times Y$ in the sense that each
point-image $\mathbf{O}_\varepsilon(y)$ by $\mathbf{O}_\varepsilon$ is
just the open $\varepsilon$-ball centred at $y\in Y$.  For a relation
$\Phi:X\ssmap Y$, we will use
$\mathbf{O}_\varepsilon[\Phi]=\mathbf{O}_\varepsilon\circ \Phi:X\ssmap
Y$ to denote the composite relation. Evidently, each point-image
$\mathbf{O}_\varepsilon[\Phi](x)=\bigcup_{y\in
  \Phi(x)}\mathbf{O}_\varepsilon(y)$ is the
$\varepsilon$-neighbourhood of $\Phi(x)$. Moreover, $\Phi:X\ssmap Y$
is closed-valued precisely when
$\Phi=\bigcap_{\varepsilon>0}\mathbf{O}_\varepsilon[\Phi]$. Another
benefit of these considerations is the following simple
characterisation of l.s.c.\ mappings in terms of open relations whose
prototype can be found in \cite[Propositions 2.1 and
6.9]{gutev:05}. Intuitively, it states that $\Phi:X\ssmap Y$ is
l.s.c.\ precisely when it has a special ``base'' of open relations.

\begin{proposition}
  \label{proposition-Open-Rel-vgg:1}
  Let $X$ be a space and $(Y,\rho)$ be a metric space. Then for a
  relation $\Phi:X\ssmap Y$, the following are equivalent\textup{:}
  \begin{enumerate}
  \item\label{item:Open-Rel-vgg:1} $\Phi$ is l.s.c.\
  \item\label{item:Open-Rel-vgg:2} $\mathbf{O}_\varepsilon[\Phi]$ is
    an open relation for every $\varepsilon>0$.
  \item\label{item:Open-Rel-vgg:3} There are open relations
    $\Omega_n:X\ssmap Y$ with
    ${\Phi\subset \Omega_n\subset \mathbf{O}_{\frac1n}[\Phi]}$,
    $n\in\N$.
  \end{enumerate}
\end{proposition}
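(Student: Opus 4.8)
The plan is to prove the cyclic chain of implications \ref{item:Open-Rel-vgg:1} $\Rightarrow$ \ref{item:Open-Rel-vgg:2} $\Rightarrow$ \ref{item:Open-Rel-vgg:3} $\Rightarrow$ \ref{item:Open-Rel-vgg:1}, with the first two steps essentially immediate. For \ref{item:Open-Rel-vgg:1} $\Rightarrow$ \ref{item:Open-Rel-vgg:2}, recall that for each $\varepsilon>0$ the $\varepsilon$-balls form the open relation $\mathbf{O}_\varepsilon\subset Y\times Y$, so that $\mathbf{O}_\varepsilon[\Phi]=\mathbf{O}_\varepsilon\circ\Phi$ is the composite of the l.s.c.\ relation $\Phi$ with the open relation $\mathbf{O}_\varepsilon$; by Proposition \ref{proposition-Open-Rel-v4:1} this composite is open. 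For \ref{item:Open-Rel-vgg:2} $\Rightarrow$ \ref{item:Open-Rel-vgg:3}, I would simply set $\Omega_n=\mathbf{O}_{1/n}[\Phi]$. Each $\Omega_n$ is open by \ref{item:Open-Rel-vgg:2}, the inclusion $\Omega_n\subset\mathbf{O}_{1/n}[\Phi]$ is an equality, and $\Phi\subset\Omega_n$ holds because $\rho(y,y)=0<1/n$ gives $y\in\mathbf{O}_{1/n}(y)\subset\mathbf{O}_{1/n}[\Phi](x)$ for every $y\in\Phi(x)$.

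The substance of the proof is the implication \ref{item:Open-Rel-vgg:3} $\Rightarrow$ \ref{item:Open-Rel-vgg:1}. Fix an open set $U\subset Y$ and a point $x_0\in\Phi^{-1}[U]$; I must produce an open neighbourhood of $x_0$ inside $\Phi^{-1}[U]$. Choose $y_0\in\Phi(x_0)\cap U$ and, using that $U$ is open, pick $n\in\N$ with $\mathbf{O}_{1/n}(y_0)\subset U$. Since $\langle x_0,y_0\rangle\in\Phi\subset\Omega_n$ and $\Omega_n$ is open in $X\times Y$, there is a basic open box $V\times\mathbf{O}_\delta(y_0)\subset\Omega_n$ with $x_0\in V$. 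I claim $V\subset\Phi^{-1}[U]$: for $x\in V$ we have $\langle x,y_0\rangle\in\Omega_n\subset\mathbf{O}_{1/n}[\Phi]$, so $y_0\in\mathbf{O}_{1/n}[\Phi](x)$ and hence there is $z\in\Phi(x)$ with $\rho(y_0,z)<1/n$; then $z\in\mathbf{O}_{1/n}(y_0)\subset U$, so $\Phi(x)\cap U\neq\emptyset$, i.e.\ $x\in\Phi^{-1}[U]$. Thus $\Phi^{-1}[U]$ is open and $\Phi$ is l.s.c.

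I expect the main obstacle to be precisely this last implication, and in particular the temptation to argue through $\bigcap_n\Omega_n$. A countable intersection of l.s.c.\ (or open) relations need not be l.s.c., so passing to the intersection is not by itself enough; moreover $\bigcap_n\Omega_n$ recovers only the pointwise closure of $\Phi$ rather than $\Phi$ itself. The point of the argument above is that the sandwich $\Phi\subset\Omega_n\subset\mathbf{O}_{1/n}[\Phi]$ allows one to work at a single, well-chosen index $n$: the openness of $U$ fixes the scale $1/n$, the openness of $\Omega_n$ then supplies the horizontal neighbourhood $V$, and the upper bound $\Omega_n\subset\mathbf{O}_{1/n}[\Phi]$ converts the membership $y_0\in\Omega_n(x)$ back into a genuine point of $\Phi(x)$ lying within $1/n$ of $y_0$, hence in $U$.
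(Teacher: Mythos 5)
Your proof is correct and takes essentially the same route as the paper's: \ref{item:Open-Rel-vgg:1}$\implies$\ref{item:Open-Rel-vgg:2} via Proposition \ref{proposition-Open-Rel-v4:1}, \ref{item:Open-Rel-vgg:2}$\implies$\ref{item:Open-Rel-vgg:3} by taking $\Omega_n=\mathbf{O}_{\frac1n}[\Phi]$, and the same single-scale argument for \ref{item:Open-Rel-vgg:3}$\implies$\ref{item:Open-Rel-vgg:1}. If anything, your last step is marginally cleaner: by testing $\Omega_n$ only at the centre $y_0$ you can work with $\mathbf{O}_{\frac1n}(y_0)\subset U$ directly, whereas the paper verifies the claim for all $y\in\mathbf{O}_\delta(q)$ and therefore needs the radius $\frac2n$ together with $\delta<\frac1n$.
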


\begin{proof}
  By Proposition \ref{proposition-Open-Rel-v4:1}, it only suffices to
  show that \ref{item:Open-Rel-vgg:3} implies
  \ref{item:Open-Rel-vgg:1}. So, let $\Omega_n:X\ssmap Y$, $n\in \N$,
  be as in \ref{item:Open-Rel-vgg:3}. Also, let $U\subset Y$ be open,
  $p\in \Phi^{-1}[U]$ and $q\in \Phi(p)\cap U$. Then
  $\mathbf{O}_{\frac2n}(q)\subset U$ for some $n\in \N$.  Moreover,
  since $\langle p,q\rangle\in \Phi\subset \Omega_n$, it follows that
  $\langle p,q\rangle\in V\times \mathbf{O}_{\delta}(q)\subset
  \Omega_n$ for some open set $V\subset X$ and $0<\delta<\frac1n$. We
  now have that $V\subset \Phi^{-1}[U]$. Indeed, for $x\in V$ and
  $y\in \mathbf{O}_{\delta}(q)$, we get that
  $y\in \Omega_n(x)\subset \mathbf{O}_{\frac1n}[\Phi](x)$ and
  $\mathbf{O}_{\frac1n}(y)\subset \mathbf{O}_{\delta+\frac1n}(q)
  \subset \mathbf{O}_{\frac2n}(q)\subset U$. Accordingly,
  $x\in \Phi^{-1}[U]$ because
  $\emptyset\neq \mathbf{O}_{\frac1n}(y)\cap \Phi(x)\subset
  \Phi(x)\cap U$. Thus, $\Phi$ is l.s.c.
\end{proof}

We shall say that a space $X$ is \emph{$\tau$-metacompact} if every
open cover of $X$ of cardinality $\leq\tau$, has a point-finite open
refinement. A space $X$ is \emph{metacompact} (called also
\emph{weakly paracompact}) if it is $\tau$-metacompact for every
$\tau$. These properties can be expressed in terms of indexed
refinements, see \cite[Lemma 1.4]{MR644284}. Namely, if $X$ is
$\tau$-metacompact and $|\mathscr{A}|\leq\tau$, then every open cover
$\Omega:X\ssmap \mathscr{A}$ of $X$ admits a point-finite open cover
$\varphi:X\ssmap \mathscr{A}$ such that
$\varphi^{-1}(\alpha)\subset \Omega^{-1}(\alpha)$,
$\alpha\in \mathscr{A}$, or equivalently $\varphi\subset \Omega$. In
this case, $\varphi$ is called an \emph{indexed refinement} (or
\emph{shrinking}) of $\Omega$.  For set-valued mappings, the condition
$\varphi\subset \Omega$ is well known and means that $\varphi$ is a
\emph{set-valued selection} (or a \emph{set-selection}, or
\emph{multi-selection}) for $\Omega$.\medskip

A metric $d$ on a vector space $E$ is \emph{translation-invariant}, or
simply \emph{invariant}, if $d(u+w,v+w)=d(u,v)$ for all $u,v,w\in
E$. A topological vector space $E$ is called \emph{Fr\'echet} if it is
locally convex and its topology is generated by a complete invariant
metric $d$. It is well known that each Fr\'echet space $E$ has a
complete invariant metric $d$ such that all open balls
$\mathbf{O}_\varepsilon(u)$, for $\varepsilon>0$ and $u\in E$, are
convex subsets of $E$, see \cite[Theorem 1.24]{MR1157815}. We now have
the following theorem.

\begin{theorem}
  \label{theorem-Open-Rel-v4:2}
  Let $X$ be a $\tau$-metacompact space and $E$ be a Fr\'echet space
  with ${d(E)\leq\tau}$. Then the following are equivalent\textup{:}
  \begin{enumerate}
  \item\label{item:Open-Rel-v4:1} Each convex-valued open relation
    ${\Theta:X\ssmap E}$ has a continuous selection.
  \item\label{item:Open-Rel-v4:2} Each closed-convex-valued
    l.s.c.\ relation $\Phi:X\ssmap E$ has a continuous selection.
  \end{enumerate}
\end{theorem}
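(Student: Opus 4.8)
The plan is to prove the two implications separately. Throughout I fix on $E$ a complete invariant metric $d$ generating its topology whose open balls $\mathbf{O}_\varepsilon(u)$ are all convex, as guaranteed by the Fr\'echet structure. I expect $\tau$-metacompactness to enter only in \ref{item:Open-Rel-v4:2}$\Rightarrow$\ref{item:Open-Rel-v4:1}, whereas \ref{item:Open-Rel-v4:1}$\Rightarrow$\ref{item:Open-Rel-v4:2} should hold for an arbitrary domain by a Michael-type iteration that uses only the completeness of $E$.

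For \ref{item:Open-Rel-v4:2}$\Rightarrow$\ref{item:Open-Rel-v4:1}, let $\Theta:X\ssmap E$ be convex-valued and open. Since $d(E)\leq\tau$, I would fix a dense set $\mathscr{A}\subset E$ with $|\mathscr{A}|\leq\tau$ and pass to the open cover $\Omega=\Theta\cap X\times\mathscr{A}:X\ssmap\mathscr{A}$, exactly as in the proof of Theorem \ref{theorem-Count-Par-v2:1}. By $\tau$-metacompactness, $\Omega$ admits a point-finite open indexed refinement $\varphi\subset\Omega\subset\Theta$. The key point is that each image $\varphi(x)=\{\alpha\in\mathscr{A}:x\in\varphi^{-1}(\alpha)\}$ is then a finite nonempty subset of $\Theta(x)$, so its convex hull $\Psi(x)=\conv(\varphi(x))$ is a compact, hence closed, convex subset of $\Theta(x)$. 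The resulting relation $\Psi:X\ssmap E$ is closed-convex-valued, and it is l.s.c.\ because for open $U\subset E$ any representation $\sum_i t_i\alpha_i\in U$ of a point of $\Psi(x_0)\cap U$ persists on the open neighbourhood $\bigcap_i\varphi^{-1}(\alpha_i)$ of $x_0$. Then \ref{item:Open-Rel-v4:2} furnishes a continuous selection for $\Psi$, which is automatically a selection for $\Theta$ because $\Psi\subset\Theta$. Note that point-finiteness, rather than local finiteness, already suffices here precisely because it is what makes $\varphi(x)$ finite and hence its convex hull closed.

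For \ref{item:Open-Rel-v4:1}$\Rightarrow$\ref{item:Open-Rel-v4:2}, let $\Phi:X\ssmap E$ be closed-convex-valued and l.s.c. I would construct continuous maps $f_n:X\to E$ with $d(f_n(x),\Phi(x))<2^{-n}$ and $d(f_n(x),f_{n+1}(x))<2^{-n+1}$ for every $x$. The base step applies \ref{item:Open-Rel-v4:1} to $\mathbf{O}_1[\Phi]$, which is open by Proposition \ref{proposition-Open-Rel-vgg:1} and convex-valued because $\varepsilon$-neighbourhoods of convex sets are convex once the balls are convex (using translation invariance of $d$). For the inductive step, having $f_n$ I form $\Phi_n=\Phi\cap B_n$, where $B_n=\{\langle x,y\rangle:d(y,f_n(x))<2^{-n}\}$ is an open relation; this $\Phi_n$ is convex-valued and, crucially, l.s.c., since the intersection of an l.s.c.\ relation with a nonempty-valued open relation is again l.s.c.\ (near any $\langle x_0,y_0\rangle\in\Phi_n$, openness of $B_n$ gives a box $V\times\mathbf{O}_\delta(y_0)\subset B_n$ and lower semicontinuity of $\Phi$ gives points of $\Phi(x)$ inside $\mathbf{O}_\delta(y_0)$ for $x$ near $x_0$). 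Applying \ref{item:Open-Rel-v4:1} to the convex-valued open relation $\mathbf{O}_{2^{-n-1}}[\Phi_n]$ yields $f_{n+1}$ with the stated estimates. The sequence $\{f_n\}$ is then uniformly Cauchy, so by completeness it converges uniformly to a continuous $f$; since $d(f(x),\Phi(x))=0$ and $\Phi(x)$ is closed, $f$ is the desired selection.

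The main obstacle I anticipate is the bookkeeping in this last iteration: keeping the successive maps l.s.c.\ and convex-valued while forcing the approximation errors $2^{-n}$ and the increments $2^{-n+1}$ to line up so that the limit lands in the closed values of $\Phi$. The two supporting facts, that intersecting with the open ball-relation $B_n$ preserves lower semicontinuity and that the chosen metric keeps all relevant $\varepsilon$-neighbourhoods convex, are exactly what make each $\mathbf{O}_{2^{-n-1}}[\Phi_n]$ a legitimate convex-valued open relation to which hypothesis \ref{item:Open-Rel-v4:1} applies.
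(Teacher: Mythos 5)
Your proposal is correct and takes essentially the same route as the paper: the implication \ref{item:Open-Rel-v4:2}$\Rightarrow$\ref{item:Open-Rel-v4:1} uses the same dense set $\mathscr{A}$, point-finite indexed refinement from $\tau$-metacompactness and convex hulls of finite sets, while \ref{item:Open-Rel-v4:1}$\Rightarrow$\ref{item:Open-Rel-v4:2} is the same Michael-type successive approximation via the open fattenings $\mathbf{O}_\varepsilon[\cdot]$ and completeness of the invariant metric (the paper intersects the two open relations $\mathbf{O}_{2^{-n}}[f_n]\cap \mathbf{O}_{2^{-(n+1)}}[\Phi]$ instead of fattening $\Phi\cap B_n$, which is only a cosmetic difference). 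The one slip is your base step: apply \ref{item:Open-Rel-v4:1} to $\mathbf{O}_{2^{-1}}[\Phi]$ rather than $\mathbf{O}_{1}[\Phi]$, so that $d(f_1(x),\Phi(x))<2^{-1}$ and $\Phi\cap B_1$ is indeed nonempty-valued.
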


\begin{proof}
  Assume that \ref{item:Open-Rel-v4:1} holds, and $\Phi:X\ssmap E$
  is as in \ref{item:Open-Rel-v4:2}. Take a complete invariant metric
  $d$ on $E$ such that all open balls are convex. By Proposition
  \ref{proposition-Open-Rel-vgg:1}, the associated relation
  $\mathbf{O}_{2^{-1}}[\Phi]:X\ssmap E$ is open. It is also
  convex-valued because
  $\mathbf{O}_{2^{-1}}[\Phi](x)=\Phi(x)+\mathbf{O}_{2^{-1}}(\mathbf{0})$,
  $x\in X$, where $\mathbf{0}$ is the origin of $E$. Therefore, by
  \ref{item:Open-Rel-v4:1}, $\mathbf{O}_{2^{-1}}[\Phi]$ has a
  continuous selection $f_1:X\to E$. Then the intersection of each
  open ball $\mathbf{O}_{2^{-1}}(f_1(x))= \mathbf{O}_{2^{-1}}[f_1](x)$
  with $\Phi(x)$ is nonempty, so the resulting relation
  $\mathbf{O}_{2^{-1}}[f_1]\cap \mathbf{O}_{2^{-2}}[\Phi]:X\ssmap E$
  is also open and convex-valued. Hence, for the same reason, it has a
  continuous selection $f_2:X\to E$. Thus, by induction, there exists
  a sequence $f_n:X\to E$, $n\in\N$, of continuous maps such that
  $f_{n+1}$ is a selection for
  $\mathbf{O}_{2^{-n}}[f_n]\cap \mathbf{O}_{2^{-(n+1)}}[\Phi]$ for
  each $n\in\N$.  Accordingly, $\{f_n\}$ is uniformly convergent to a
  continuous map $f:X\to E$ because $d$ is a complete metric on
  $E$. Moreover, $f$ is a selection for $\Phi$ because $\Phi$ is
  closed-valued and $d(f (x), \Phi(x)) = 0$ for all
  $x\in X$.\smallskip

  Conversely, let \ref{item:Open-Rel-v4:2} hold, $\Theta:X\ssmap E$ be
  a convex-valued open relation, and $\mathscr{A}\subset E$ be a dense
  set with $|\mathscr{A}|\leq \tau$. As in the proof of Theorem
  \ref{theorem-Count-Par-v2:1}, the intersection
  $\Omega=\Theta\cap X\times \mathscr{A}$ is an open cover
  $\Omega:X\ssmap \mathscr{A}$ of $X$. Since $|\mathscr{A}|\leq \tau$
  and $X$ is $\tau$-metacompact, there exists an open and point-finite
  cover $\varphi: X\ssmap \mathscr{A}$ of $X$ with
  $\varphi\subset \Omega\subset\Theta$. Then
  $\varphi:X\ssmap \mathscr{A}\subset E$ is l.s.c., and each
  $\varphi(x)$ is a finite subset of $\Theta(x)$. We can now set
  $\Phi(x)$ to be the convex hull of $\varphi(x)$, so that $\Phi(x)$
  is a closed convex subset of $\Theta(x)$ for each $x\in X$. Moreover,
  by \cite[Proposition 2.6]{michael:56a}, $\Phi:X\ssmap E$ is also
  l.s.c. Hence, by \ref{item:Open-Rel-v4:2}, $\Phi$ has a continuous
  selection and, therefore, $\Theta$ has a continuous selection as well.
\end{proof}

Theorems \ref{theorem-Count-Par-v2:1} and
\ref{theorem-Open-Rel-v4:2} imply the following result obtained by
Michael in \cite[Theorem 1.2]{michael:66}, for an alternative proof
see \cite[Theorem 5.1]{zbMATH05275704}.

\begin{corollary}
  \label{corollary-Open-Rel-v4:2}
  If $X$ is a $\tau$-paracompact normal space and $E$ is a Fr\'echet
  space with $d(E)\leq\tau$, then every closed-convex-valued
  l.s.c.\ relation $\Phi:X\ssmap E$ has a continuous selection.
\end{corollary}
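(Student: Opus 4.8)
The plan is to chain Theorems \ref{theorem-Count-Par-v2:1} and \ref{theorem-Open-Rel-v4:2}, using the hypotheses to activate each in turn. First I would record two elementary compatibility observations. A $\tau$-paracompact space is automatically $\tau$-metacompact, since any locally finite open refinement is in particular point-finite; hence $X$ satisfies the standing hypothesis of Theorem \ref{theorem-Open-Rel-v4:2}. Moreover, a Fr\'echet space is in particular a topological vector space, so the same pair $(X,E)$ with $d(E)\leq\tau$ simultaneously meets the hypotheses of both theorems.

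Next I would invoke Theorem \ref{theorem-Count-Par-v2:1}. Since $X$ is $\tau$-paracompact and normal, the implication \ref{item:OG-Paracompact-v20:1} $\Rightarrow$ \ref{item:OG-Paracompact-v20:2} yields that every convex-valued open relation ${\Theta:X\ssmap E}$ has a continuous selection. This is precisely condition \ref{item:Open-Rel-v4:1} of Theorem \ref{theorem-Open-Rel-v4:2}.

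Finally, because $X$ is $\tau$-metacompact and $E$ is a Fr\'echet space with $d(E)\leq\tau$, Theorem \ref{theorem-Open-Rel-v4:2} is applicable, and its equivalence \ref{item:Open-Rel-v4:1} $\Leftrightarrow$ \ref{item:Open-Rel-v4:2} upgrades the selection property for convex-valued open relations to the same property for closed-convex-valued l.s.c.\ relations $\Phi:X\ssmap E$, which is exactly the assertion. I expect no substantive obstacle here: the only genuine checks are the trivial inclusion ``$\tau$-paracompact $\Rightarrow$ $\tau$-metacompact'' and the matching of the range hypotheses, since all of the analytic content has already been absorbed into the two cited theorems.
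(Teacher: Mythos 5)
Your proposal is correct and is precisely the argument the paper intends: the corollary is stated as an immediate consequence of Theorems \ref{theorem-Count-Par-v2:1} and \ref{theorem-Open-Rel-v4:2}, chained exactly as you chain them, with the only glue being the observation that $\tau$-paracompactness implies $\tau$-metacompactness (so Theorem \ref{theorem-Open-Rel-v4:2} applies) and that a Fr\'echet space is a topological vector space (so Theorem \ref{theorem-Count-Par-v2:1} applies). Your write-up in fact makes these two compatibility checks explicit, which the paper leaves tacit.
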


The selection problem for l.s.c.\ mappings is a natural generalisation
of the extension problem, see \cite[Examples 1.3 and
1.3$^*$]{michael:56a}. In contrast, continuous selections for open
relations do not imply any one of the familiar extensions
theorems. Briefly, as in \cite{michael:56a}, for spaces $X$ and $Y$, a
closed set $A\subset X$, an open relation $\Phi:X\ssmap Y$ and a
continuous selection $g:A\to Y$ for $\Phi\uhr A$, define
$\varphi:X\ssmap Y$ by $\varphi(x)=\{g(x)\}$ if $x\in A$ and
$\varphi(x)=\Phi(x)$ otherwise. Then $\varphi$ is l.s.c., but is not
necessary an open relation. However, some of the extension theorems
are very useful to obtain continuous extensions of partial selections
for open relations.

\begin{corollary}
  \label{corollary-Open-Rel-v5:1}
  Let $X$ be a $\tau$-paracompact normal space, $A\subset X$ be a
  closed set, $E$ be a Fr\'echet space with $d(E)\leq\tau$, and
  $\Phi:X\ssmap E$ be a convex-valued open relation. Then every
  continuous selection $g:A\to E$ for $\Phi\uhr A$ can be extended
  to a continuous selection for $\Phi$.
\end{corollary}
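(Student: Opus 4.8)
The plan is to obtain the extension in two moves: first extend $g$ to a continuous map defined on all of $X$, and then correct it by a convex-combination patch so that it becomes a genuine selection for $\Phi$ while still agreeing with $g$ on $A$. Since $X$ is $\tau$-paracompact and normal, $A\subset X$ is closed, and $E$ is a Fr\'echet space with $d(E)\leq\tau$, Dowker's extension theorem \cite{dowker:52} supplies a continuous map $\widehat g:X\to E$ with $\widehat g\uhr A=g$. This $\widehat g$ need not be a selection for $\Phi$ away from $A$, so I would record where it does work by setting $U=\{x\in X:\widehat g(x)\in\Phi(x)\}$. Because the map $x\mapsto\langle x,\widehat g(x)\rangle$ is continuous and $\Phi\subset X\times E$ is open, the set $U$ is open; moreover $A\subset U$, since $\widehat g(x)=g(x)\in\Phi(x)$ for every $x\in A$. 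Thus $\widehat g$ is already a selection for $\Phi$ on the open neighbourhood $U$ of $A$.

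Next I would produce a second selection to use away from $A$. Applying Theorem \ref{theorem-Count-Par-v2:1} to the convex-valued open relation $\Phi:X\ssmap E$ over the topological vector space $E$ with $d(E)\leq\tau$, there is a continuous selection $h:X\to E$ for $\Phi$. The sets $A$ and $X\setminus U$ are disjoint and closed, so normality of $X$ yields a continuous $\lambda:X\to[0,1]$ with $\lambda\uhr A\equiv 1$ and $\lambda\uhr(X\setminus U)\equiv 0$. I then define
\[
  f(x)=\lambda(x)\cdot\widehat g(x)+\bigl(1-\lambda(x)\bigr)\cdot h(x),\qquad x\in X,
\]
which is continuous because the vector operations in $E$ are continuous.

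It remains to verify that $f$ is the required extension, and this is the step where the openness and convexity of $\Phi$ are used. On $A$ we have $\lambda\equiv 1$, hence $f\uhr A=\widehat g\uhr A=g$, so $f$ extends $g$. For an arbitrary $x\in X$, if $\lambda(x)=0$ then $f(x)=h(x)\in\Phi(x)$; while if $\lambda(x)>0$ then necessarily $x\in U$, so both $\widehat g(x)\in\Phi(x)$ and $h(x)\in\Phi(x)$, whence $f(x)\in\Phi(x)$ because $\Phi(x)$ is convex. Thus $f$ is a continuous selection for $\Phi$ extending $g$. The one genuinely external ingredient is Dowker's extension theorem, which provides the global continuous extension $\widehat g$; everything else reduces to the open-relation selection Theorem \ref{theorem-Count-Par-v2:1} together with the elementary observation that convexity lets one interpolate between the local selection $\widehat g$ near $A$ and the global selection $h$ without leaving $\Phi$. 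The main point to get right is that the cut-off $\lambda$ vanishes outside $U$, so the ``bad'' values of $\widehat g$ never contribute to the convex combination.
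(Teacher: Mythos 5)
Your proposal is correct and follows essentially the same route as the paper: Dowker's extension theorem to get a global continuous extension $\widehat g$, openness of the relation $\Phi$ to find the neighbourhood $U$ of $A$ on which $\widehat g$ is a selection, Theorem \ref{theorem-Count-Par-v2:1} for a global selection $h$, and a Urysohn function to form the convex combination. Your write-up even spells out two details the paper leaves implicit (why $U$ is open, and the case analysis showing $f(x)\in\Phi(x)$), so there is nothing to correct.
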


\begin{proof}
  By Dowker's extension theorem \cite{dowker:52}, see also Corollary
  \ref{corollary-Open-Rel-v4:2}, $g$ can be extended to a continuous
  map $\tilde{g}:X\to E$. Since $\Phi$ is an open relation and
  $g=\tilde{g}\uhr A$ is a selection $\Phi\uhr A$, i.e.\
  $\tilde{g}\uhr A \subset \Phi\uhr A$, there is an open set
  $U\subset X$ such that $A\subset U$ and
  $\tilde{g}\uhr U \subset \Phi\uhr U$. Take a continuous function
  $\eta:X\to [0,1]$ such that $A\subset \eta^{-1}(1)$ and
  $X\setminus U\subset \eta^{-1}(0)$. Next, using Theorem
  \ref{theorem-Count-Par-v2:1}, take a continuous selection $h:X\to E$
  for $\Phi$. Finally, define another continuous map $f:X\to E$ by
  $f(x)=\eta(x)\cdot \tilde{g}(x)+(1-\eta(x))\cdot h(x)$, $x\in
  X$. Evidently, $f$ is a continuous selection for $\Phi$ with
  $f\uhr A=g$.
\end{proof}

Applying the same proof as in Corollary \ref{corollary-Open-Rel-v5:1},
but now using Dugundji's extension theorem \cite{dugundji:51} instead
of Dowker's one \cite{dowker:52}, we also get the following selection
extension result.

\begin{corollary}
  \label{corollary-Open-Rel-v5:2}
  Let $X$ be a metrizable space, $A\subset X$ be a
  closed set, $E$ be a locally convex topological vector space, and
  $\Phi:X\ssmap E$ be a convex-valued open relation. Then every
  continuous selection $g:A\to E$ for $\Phi\uhr A$ can be extended
  to a continuous selection for $\Phi$.  
\end{corollary}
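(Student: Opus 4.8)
The plan is to follow the argument of Corollary~\ref{corollary-Open-Rel-v5:1} almost verbatim, making only the two substitutions forced by the changed hypotheses: I replace Dowker's extension theorem by Dugundji's, and I extract from metrizability of $X$ the two topological properties the earlier proof obtained from the $\tau$-paracompact normal assumption. Indeed, metrizable spaces are paracompact (hence $\tau$-paracompact for every cardinal $\tau$) and normal; this is exactly what I need. Local convexity of $E$, paired with metrizability of $X$, is precisely the setting in which Dugundji's theorem \cite{dugundji:51} operates, so it takes over the role that Dowker's theorem played for Fréchet-valued maps over $\tau$-paracompact normal domains.

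Concretely, I would first apply Dugundji's extension theorem to extend the partial selection $g:A\to E$ to a continuous map $\tilde g:X\to E$ (its image lies in $\conv(g(A))$, but I will not need this). Since $\Phi$ is an \emph{open} relation and $\tilde g\uhr A\subset\Phi\uhr A$, openness of $\Phi$ in $X\times E$ yields an open set $U\subset X$ with $A\subset U$ and $\tilde g\uhr U\subset\Phi\uhr U$. Normality of $X$ then supplies, by Urysohn's lemma, a continuous $\eta:X\to[0,1]$ with $A\subset\eta^{-1}(1)$ and $X\setminus U\subset\eta^{-1}(0)$. For the global ingredient I would invoke Theorem~\ref{theorem-Count-Par-v2:1} with $\tau=d(E)$: here $X$ is $\tau$-paracompact and normal and $E$ is a topological vector space with $d(E)\le\tau$, so $\Phi$ has a continuous selection $h:X\to E$. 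Finally I set $f(x)=\eta(x)\cdot\tilde g(x)+(1-\eta(x))\cdot h(x)$.

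Continuity of $f$ is immediate from continuity of the vector operations in $E$. To check that $f$ is a selection, fix $x\in X$: if $\eta(x)=0$ then $f(x)=h(x)\in\Phi(x)$, while if $\eta(x)>0$ then $x\in U$, so both $\tilde g(x)\in\Phi(x)$ and $h(x)\in\Phi(x)$, and the convex combination $f(x)$ lies in $\Phi(x)$ because $\Phi(x)$ is convex. Since $\eta\equiv 1$ on $A$, we get $f\uhr A=\tilde g\uhr A=g$, as required. I do not expect a genuine obstacle, as the proof is a direct transcription of Corollary~\ref{corollary-Open-Rel-v5:1}; the only points deserving care are confirming that metrizability of $X$ delivers both the normality used for Urysohn's lemma and the $\tau$-paracompactness needed to apply Theorem~\ref{theorem-Count-Par-v2:1} at $\tau=d(E)$, and that the pair $(X\text{ metrizable},\,E\text{ locally convex})$ is exactly the hypothesis required by Dugundji's theorem.
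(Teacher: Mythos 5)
Your proposal is correct and coincides with the paper's own proof, which is stated precisely as ``the same proof as in Corollary~\ref{corollary-Open-Rel-v5:1}, but with Dugundji's extension theorem in place of Dowker's''; your explicit checks (metrizable $\implies$ paracompact and normal, so Theorem~\ref{theorem-Count-Par-v2:1} applies with $\tau\geq d(E)$, and Urysohn's lemma supplies $\eta$) are exactly the details the paper leaves implicit.
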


\section{Insertions and Open Relations}
\label{sec:select-insert-prod}

A function $\xi:X\to \R$ is \emph{lower} (\emph{upper})
\emph{semicontinuous} if the set
\[
\{x\in X:\xi(x)>r\}\quad \text{(respectively, $\{x\in X:\xi(x)<r\}$)}
\]
is open in $X$ for every $r\in \R$. For functions $\xi,\eta:X\to \R$,
we write $\xi< \eta$ to express that $\xi(x)<\eta(x)$ for every
$x\in X$.\medskip

The following theorem was obtained by Dowker \cite{dowker:51}, the
insertion property was also established by Kat\v{e}tov
\cite{katetov:51,MR0060211}.

\begin{theorem}[\cite{dowker:51,katetov:51,MR0060211}]
  \label{theorem-Count-Par-v1:1}
  The following properties of a space X are
  equivalent\textup{:}
  \begin{enumerate}
  \item\label{item:OG-Paracompact-v13:1} $X$ is countably paracompact
    and normal.
  \item\label{item:OG-Paracompact-v13:2} If $\xi:X\to \R$ is upper
    semicontinuous, $\eta:X\to \R$ is lower semicontinuous and
    $\xi<\eta$, then there is a continuous function $f:X\to \R$
    with $\xi < f < \eta$.
  \item\label{item:OG-Paracompact-v13:3} The topological product
    $X\times[0,1]$ is normal.
  \end{enumerate}
\end{theorem}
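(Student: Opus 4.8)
The plan is to route everything through the selection characterisation of Theorem \ref{theorem-Count-Par-v2:1} specialised to $\tau=\omega$ and $E=\R$ (which is separable, so $d(\R)\leq\omega$), viewing the insertion property \ref{item:OG-Paracompact-v13:2} as the one-dimensional shadow of the selection property. Concretely, I would prove \ref{item:OG-Paracompact-v13:1}$\Leftrightarrow$\ref{item:OG-Paracompact-v13:2} by identifying an insertion datum $\langle\xi,\eta\rangle$ with the interval-valued open relation $x\mapsto(\xi(x),\eta(x))$, and then treat \ref{item:OG-Paracompact-v13:3} as the classical Dowker product equivalence, to be obtained either directly or from the product-normality results of Section \ref{sec:selections-products}.

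For \ref{item:OG-Paracompact-v13:1}$\Rightarrow$\ref{item:OG-Paracompact-v13:2}: given $\xi$ upper semicontinuous, $\eta$ lower semicontinuous with $\xi<\eta$, I would set $\Phi(x)=(\xi(x),\eta(x))$. Each $\Phi(x)$ is a nonempty open interval, hence convex, so $\Phi:X\ssmap\R$, and I would verify that $\Phi\subset X\times\R$ is open by writing its graph as the intersection of $\{\langle x,t\rangle:t>\xi(x)\}$ and $\{\langle x,t\rangle:t<\eta(x)\}$: given $\langle x_0,t_0\rangle$ in the first set, a level $r$ with $\xi(x_0)<r<t_0$ produces the basic open box $\{x:\xi(x)<r\}\times(r,+\infty)$ inside it, and symmetrically for the second set using $\eta$. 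Theorem \ref{theorem-Count-Par-v2:1} with $\tau=\omega$, $E=\R$ then yields a continuous selection $f$, which is exactly a continuous $f$ with $\xi<f<\eta$.

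For \ref{item:OG-Paracompact-v13:2}$\Rightarrow$\ref{item:OG-Paracompact-v13:1}, normality is the easy half: for disjoint closed sets $A,B$ I would take $\xi=\chi_B-1$ (upper semicontinuous since $B$ is closed) and $\eta=1-\chi_A$ (lower semicontinuous since $A$ is closed), check $\xi<\eta$ pointwise, and observe that any inserted $f$ satisfies $f<0$ on $A$ and $f>0$ on $B$; this is the bounded interval-valued instance of Proposition \ref{proposition-OG-Paracompact-v21:1}. Countable paracompactness is the substantive half. By the result of Mack invoked in the proof of Theorem \ref{theorem-Count-Par-v2:1}, it suffices to refine an arbitrary \emph{increasing} open cover $\{U_n\}_{n\geq1}$. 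I would set $k(x)=\min\{n:x\in U_n\}$ and $\eta(x)=1/k(x)$; this $\eta$ is lower semicontinuous (its superlevel sets are the $U_n$) and positive, so inserting below it against $\xi\equiv0$ gives a continuous $f$ with $0<f(x)<1/k(x)$. The overlapping bands $W_n=\{x:\tfrac1{n+1}<f(x)<\tfrac1{n-1}\}$ then form a locally finite open cover (local finiteness because $f$ is bounded away from $0$ near each point) that refines $\{U_n\}$, since $f(x)>\tfrac1{n+1}$ forces $k(x)\leq n$, hence $x\in U_n$. Mack's theorem now delivers countable paracompactness, mirroring the $C_0(\tau)$ computation of Theorem \ref{theorem-Count-Par-v2:1} but with the one-dimensional range $\R$.

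Finally, for \ref{item:OG-Paracompact-v13:3}: the equivalence with \ref{item:OG-Paracompact-v13:1} is Dowker's product theorem. The direction \ref{item:OG-Paracompact-v13:3}$\Rightarrow$\ref{item:OG-Paracompact-v13:1} yields normality at once through the closed slice $X\times\{0\}$, and countable paracompactness through Dowker's decreasing-closed-set argument; the converse can be read off from the product-normality theorems of Section \ref{sec:selections-products}. The hard part throughout is the same recurring point: a single real-valued selection or insertion hands over normality essentially for free, whereas the covering property (countable paracompactness, and correspondingly the nontrivial half of the product equivalence) is invisible at the level of one function and must be recovered by a refinement argument. Here that obstacle is met by the reduction to increasing covers together with Mack's theorem, which is precisely why the range $\R$ suffices for $\tau=\omega$ even though the general $\tau$ case of Theorem \ref{theorem-Count-Par-v2:1} needed the richer space $C_0(\tau)$.
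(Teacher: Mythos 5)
Your proposal is correct in substance and follows the same architecture as the paper: the equivalence of \ref{item:OG-Paracompact-v13:1} and \ref{item:OG-Paracompact-v13:2} is routed through Theorem \ref{theorem-Count-Par-v2:1} with $\tau=\omega$ and $E=\R$, exactly as the paper does via Proposition \ref{proposition-Count-Par-v2:1} and Theorem \ref{theorem-Count-Par-v14:1}, and the implication \ref{item:OG-Paracompact-v13:1}$\Rightarrow$\ref{item:OG-Paracompact-v13:3} is delegated to Theorem \ref{theorem-Open-Rel-vgg-2nd:1}, which is legitimate since that theorem is proved from Theorems \ref{theorem-Count-Par-v2:1} and \ref{theorem-Count-Par-v14:1} and involves no circularity. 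The one genuinely different step is your derivation of countable paracompactness from \ref{item:OG-Paracompact-v13:2}: you reduce to an increasing cover $\{U_n\}$, insert a continuous $f$ with $0<f(x)<1/k(x)$ where $k(x)=\min\{n: x\in U_n\}$, form the bands $W_n$, and then invoke Mack's theorem. The paper (proof of Theorem \ref{theorem-Count-Par-v14:1}) instead takes an \emph{arbitrary} countable open cover $\Omega:X\ssmap \N$, composes with the order relation $\Lambda=\{\langle k,t\rangle: k<t\}$ to get the open relation $\Phi(x)=(\min\Omega(x),+\infty)$, selects a continuous $f$ above the upper semicontinuous function $\min\Omega(\cdot)$, and verifies directly that $\varphi=\Omega\cap\{\langle x,n\rangle: n<f(x)\}$ is a locally finite open shrinking of $\Omega$. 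The two arguments are mirror images (insertion above an unbounded upper semicontinuous function versus below a bounded lower semicontinuous one), but the paper's version needs neither the reduction to increasing covers nor the appeal to Mack for this step. Your version is also correct, modulo the convention $\tfrac1{n-1}=+\infty$ when $n=1$, which is trivial to fix.

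The only real gap is in \ref{item:OG-Paracompact-v13:3}$\Rightarrow$\ref{item:OG-Paracompact-v13:1}: normality via the closed slice $X\times\{0\}$ is fine, but you leave countable paracompactness to ``Dowker's decreasing-closed-set argument'', which you neither state nor execute -- as written this is a citation to folklore rather than a proof. Within the paper's own toolkit the step is exactly Theorem \ref{theorem-prod-dis-v6:2} with $\tau=\omega$: the ordinal space $\omega+1$ is homeomorphic to the closed subspace $\{0\}\cup\{1/n: n\in\N\}$ of $[0,1]$, so normality of $X\times[0,1]$ passes to the closed subspace $X\times(\omega+1)$, and Theorem \ref{theorem-prod-dis-v6:2} then yields $\omega$-paracompactness. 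Replacing the black-box appeal by this observation closes the gap and keeps the entire proof inside the paper's framework.
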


The following selection theorem was obtained by Michael \cite[Theorem
3.1$''$]{michael:56a}, it was motivated as a selection interpretation
of Theorem \ref{theorem-Count-Par-v1:1}.

\begin{theorem}[\cite{michael:56a}]
\label{theorem-Count-Par-v1:2}
  The following properties of a space $X$ are
  equivalent\textup{:}
  \begin{enumerate}
  \item\label{item:OG-Paracompact-v13:4} $X$ is countably
    paracompact and normal.
  \item\label{item:OG-Paracompact-v13:5} If $E$ is a separable Banach
    space, then every closed-convex-valued l.s.c.\ relation
    $\Phi:X\ssmap E$ has a continuous selection.
  \item\label{item:OG-Paracompact-v13:6} Every closed-convex-valued
    l.s.c.\ relation $\Phi:X\ssmap \R$ has a continuous selection.
  \end{enumerate}
\end{theorem}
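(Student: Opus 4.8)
The plan is to prove the three statements equivalent by establishing the cycle \ref{item:OG-Paracompact-v13:4}$\Rightarrow$\ref{item:OG-Paracompact-v13:5}$\Rightarrow$\ref{item:OG-Paracompact-v13:6}$\Rightarrow$\ref{item:OG-Paracompact-v13:4}, with $\tau=\omega$ throughout. The implication \ref{item:OG-Paracompact-v13:4}$\Rightarrow$\ref{item:OG-Paracompact-v13:5} is immediate from Corollary \ref{corollary-Open-Rel-v4:2}: a separable Banach space $E$ is a Fr\'echet space with $d(E)\leq\omega$, so each closed-convex-valued l.s.c.\ relation $\Phi:X\ssmap E$ has a continuous selection. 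The implication \ref{item:OG-Paracompact-v13:5}$\Rightarrow$\ref{item:OG-Paracompact-v13:6} is trivial, since $\R$ is itself a separable Banach space. Thus the only substantial work is \ref{item:OG-Paracompact-v13:6}$\Rightarrow$\ref{item:OG-Paracompact-v13:4}, and the point to keep in mind is that \ref{item:OG-Paracompact-v13:6} looks deceptively weak --- it concerns only \emph{closed}-valued relations into the \emph{single} space $\R$ --- so it cannot be fed directly into Theorem \ref{theorem-Count-Par-v2:1}; instead I would extract each of normality and countable paracompactness by choosing one specific closed-convex-valued l.s.c.\ relation into $\R$.

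For normality I would take disjoint closed sets $A,B\subset X$ and set $\Phi(x)=(-\infty,-1]$ for $x\in A$, $\Phi(x)=[1,+\infty)$ for $x\in B$, and $\Phi(x)=\R$ otherwise. Each value is closed and convex, and a routine check shows $\Phi:X\ssmap\R$ is l.s.c.\ (write $\Phi(x)=[\ell(x),r(x)]$ with $\ell$ equal to $1$ on $B$ and $-\infty$ elsewhere and $r$ equal to $-1$ on $A$ and $+\infty$ elsewhere, so that $\ell$ is upper and $r$ lower semicontinuous). A continuous selection $f$ for $\Phi$ then satisfies $f\leq -1$ on $A$ and $f\geq 1$ on $B$, which completely separates $A$ and $B$; hence $X$ is normal.

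For countable paracompactness I would argue exactly as in the converse half of the proof of Theorem \ref{theorem-Count-Par-v2:1}, but replacing the space $C_0(\tau)$ with $\R$. Take an increasing open cover $\Omega:X\ssmap\omega$, so that $G_n:=\Omega^{-1}(n)$ satisfies $G_n\subset G_{n+1}$ and $\bigcup_n G_n=X$, and put $\alpha(x)=\min\{n:x\in G_n\}$. Since $\{x:\alpha(x)\leq n\}=G_n$ is open for each $n$, the function $\alpha$ is upper semicontinuous, and therefore the half-line relation $\Phi(x)=[\alpha(x),+\infty)$ defines a closed-convex-valued l.s.c.\ relation $\Phi:X\ssmap\R$. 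By \ref{item:OG-Paracompact-v13:6} it admits a continuous selection $g:X\to\R$, which is precisely a continuous majorant $g\geq\alpha\geq 0$ of the index function. Then $\{g^{-1}((n-1,n+1)):n=0,1,2,\dots\}$ is a locally finite open refinement of $\Omega$: it is locally finite because $g$ is continuous, it covers $X$ because $g\geq 0$ is real-valued, and if $\alpha(x)\leq g(x)<n+1$ then $\alpha(x)\leq n$, so $g^{-1}((n-1,n+1))\subset G_n$. By Mack's theorem \cite[Theorem 5]{MR0211382}, as used in Theorem \ref{theorem-Count-Par-v2:1}, $X$ is $\omega$-paracompact, i.e.\ countably paracompact.

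The step I expect to be the main obstacle is recognising that \ref{item:OG-Paracompact-v13:6} already suffices for countable paracompactness despite concerning only closed-valued $\R$-relations. The naive route --- trying to upgrade the weak insertion $\xi\leq f\leq\eta$ obtained from a closed relation $[\xi,\eta]$ to the strict Dowker--Kat\v{e}tov insertion $\xi<f<\eta$ --- is circular, because producing the required strictly positive continuous minorant of the positive l.s.c.\ gap function is itself equivalent to countable paracompactness. The half-line relation $\Phi(x)=[\alpha(x),+\infty)$ sidesteps this entirely: it converts \ref{item:OG-Paracompact-v13:6} into a continuous majorant of the upper semicontinuous index function of an increasing cover, which is exactly the input Mack's refinement criterion needs.
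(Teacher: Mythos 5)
Your proposal is correct, and it differs from the paper in an essential structural way: the paper does not prove this theorem at all --- it quotes it from \cite{michael:56a} --- and its own machinery recovers it only partially. The implication \ref{item:OG-Paracompact-v13:4}$\implies$\ref{item:OG-Paracompact-v13:5} is indeed Corollary \ref{corollary-Open-Rel-v4:2} with $\tau=\omega$, exactly as you use it, and \ref{item:OG-Paracompact-v13:5}$\implies$\ref{item:OG-Paracompact-v13:6} is trivial; but for \ref{item:OG-Paracompact-v13:6}$\implies$\ref{item:OG-Paracompact-v13:4} the paper's only bridge from closed-convex-valued l.s.c.\ relations back to open relations, Theorem \ref{theorem-Open-Rel-v4:2}, carries $\tau$-metacompactness as a standing hypothesis, which is not available under \ref{item:OG-Paracompact-v13:6} alone. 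Your direct argument fills precisely this gap, and it does so by transplanting the paper's own techniques: your separating relation with values $(-\infty,-1]$, $[1,+\infty)$, $\R$ is the closed-valued analogue of Proposition \ref{proposition-OG-Paracompact-v21:1}, and your paracompactness argument is the converse half of the proof of Theorem \ref{theorem-Count-Par-v2:1} with $C_0(\tau)$ replaced by $\R$. Every step checks: $\{x:\alpha(x)\le n\}=\Omega^{-1}(n)$ is open, so $\alpha$ is u.s.c.\ and $x\mapsto[\alpha(x),+\infty)$ is a closed-convex-valued l.s.c.\ relation; a selection is a continuous majorant $g\ge\alpha\ge 0$; the bands $g^{-1}\big((n-1,n+1)\big)$ form a locally finite open refinement of the increasing cover; and Mack's theorem \cite[Theorem 5]{MR0211382} is invoked in exactly the form the paper invokes it. Your diagnosis of why the ``gap interval'' $[\xi,\eta]$ cannot work is also on target: that relation yields only the weak (Kat\v{e}tov--Tong) insertion, which characterises normality alone. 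One optional streamlining, modelled on the proof of Theorem \ref{theorem-Count-Par-v14:1} rather than Theorem \ref{theorem-Count-Par-v2:1}: for an \emph{arbitrary} countable open cover $\Omega:X\ssmap\N$ the function $\alpha(x)=\min\Omega(x)$ is still u.s.c., and with $g\ge\alpha$ obtained as above the sets $\Omega^{-1}(n)\cap g^{-1}\big((n-1,+\infty)\big)$, $n\in\N$, already form a locally finite open indexed shrinking of $\Omega$; this dispenses with increasing covers and with Mack's theorem altogether.
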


The Dowker-Kat\v{e}tov insertion property in
\ref{item:OG-Paracompact-v13:2} of Theorem
\ref{theorem-Count-Par-v1:1} has the following natural interpretation
in terms of selections for convex-valued open relations.

\begin{proposition}
  \label{proposition-Count-Par-v2:1}
  For a space $X$, the following are equivalent\textup{:}
  \begin{enumerate}
  \item\label{item:Open-Rel-vgg-4th:1} Every convex-valued open
    relation $\Phi:X\ssmap \R$ has a continuous selection.
  \item\label{item:Open-Rel-vgg-4th:2} If $\xi:X\to \R$ is upper
    semicontinuous, $\eta:X\to \R$ is lower semicontinuous and
    $\xi<\eta$, then there is a continuous function $f:X\to \R$
    with $\xi < f < \eta$. 
  \end{enumerate}
\end{proposition}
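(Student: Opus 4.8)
The plan is to move back and forth between a convex-valued open relation $\Phi:X\ssmap\R$ and the pair of functions tracing the lower and upper boundaries of its point-images. For \ref{item:Open-Rel-vgg-4th:1}$\Rightarrow$\ref{item:Open-Rel-vgg-4th:2}, I would start from an u.s.c.\ $\xi$ and an l.s.c.\ $\eta$ with $\xi<\eta$, and simply set $\Phi(x)=(\xi(x),\eta(x))$. Each image is then a nonempty open interval, so $\Phi$ is convex-valued and $\Phi:X\ssmap\R$. To see that $\Phi$ is open I would write it as the intersection of $\{\langle x,y\rangle:y>\xi(x)\}$ and $\{\langle x,y\rangle:y<\eta(x)\}$, and exhibit each as a union of open boxes,
\[
\{\langle x,y\rangle:y>\xi(x)\}=\bigcup_{r\in\R}\big(\{x:\xi(x)<r\}\times(r,+\infty)\big),
\]
and symmetrically for the other set using the sets $\{x:\eta(x)>r\}$. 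Upper semicontinuity of $\xi$ and lower semicontinuity of $\eta$ make the relevant level sets open, so $\Phi$ is open as claimed. Then any continuous selection $f$ for $\Phi$ satisfies $\xi(x)<f(x)<\eta(x)$ for all $x$, which is exactly the required insertion.

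For the converse \ref{item:Open-Rel-vgg-4th:2}$\Rightarrow$\ref{item:Open-Rel-vgg-4th:1}, given a convex-valued open relation $\Phi$, each point-image $\Phi(x)$ is the slice of an open set and is convex, hence a nonempty open interval; I would write $\Phi(x)=(\xi(x),\eta(x))$ with $\xi(x)=\inf\Phi(x)$ and $\eta(x)=\sup\Phi(x)$, a priori values in $[-\infty,+\infty]$. The key observation is that $\{x:\xi(x)<r\}=\Phi^{-1}[(-\infty,r)]$ and $\{x:\eta(x)>r\}=\Phi^{-1}[(r,+\infty)]$ for every $r\in\R$; since an open relation is l.s.c., these preimages are open, so $\xi$ is u.s.c.\ and $\eta$ is l.s.c. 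A continuous $f$ with $\xi<f<\eta$ would land in $\Phi(x)$ and hence be a selection, so I would like to apply the insertion property \ref{item:Open-Rel-vgg-4th:2} to the pair $\xi,\eta$.

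The main obstacle is precisely that $\xi$ and $\eta$ need not be finite — for instance $\Phi(x)$ may be a half-line or all of $\R$ — whereas the insertion property is stated for real-valued functions. I would remove this obstruction by a change of range: fix an increasing homeomorphism $h:\R\to(-1,1)$ and replace $\Phi$ by the relation $x\mapsto h(\Phi(x))\subset(-1,1)$, which is again convex-valued (as $h$ maps intervals to intervals) and open (being the image of $\Phi$ under the homeomorphism $\mathrm{id}_X\times h$), and whose associated boundary functions now take values in $[-1,1]$ and so are finite. Applying \ref{item:Open-Rel-vgg-4th:2} to these bounded u.s.c.\ and l.s.c.\ functions yields a continuous $g:X\to\R$ strictly between them; since the bounds lie in $[-1,1]$, automatically $g(x)\in(-1,1)$, whence $f=h^{-1}\circ g$ is a well-defined continuous selection for $\Phi$. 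Once the finiteness issue is dispatched in this way, all the remaining verifications are routine.
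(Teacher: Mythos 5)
Your proposal is correct and follows essentially the same route as the paper: both rest on the correspondence $\Phi(x)=\big(\xi(x),\eta(x)\big)$ between convex-valued open relations and pairs of u.s.c.\ and l.s.c.\ boundary functions, combined with an order-preserving homeomorphism $\R\to(-1,1)$ to dispose of the possible infiniteness of $\inf\Phi(x)$ and $\sup\Phi(x)$. The only cosmetic differences are that you invoke the homeomorphism just in the direction \ref{item:Open-Rel-vgg-4th:2}$\Rightarrow$\ref{item:Open-Rel-vgg-4th:1} and verify semicontinuity via the l.s.c.\ property of open relations, whereas the paper performs the reduction once at the outset and checks openness through a box criterion.
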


\begin{proof}
  Using the order preserving homeomorphism
  $\frac{t}{1+|t|}:\R\to (-1,1)$, the proof is reduced to showing that
  a relation $\Phi:X\ssmap (-1,1)$ is open and convex-valued if and
  only if there are functions $\xi,\eta:X\to [-1,1]$ such that $\xi$
  is upper semicontinuous, $\eta$ is lower semicontinuous and
  $\Phi(x)=\big(\xi(x),\eta(x)\big)$, $x\in X$.  For
  $\Phi:X\ssmap (-1,1)$, set $\xi(x)=\inf\Phi(x)$ and
  $\eta(x)=\sup\Phi(x)$ for ${x\in X}$. Evidently,
  $\xi,\eta:X\to [-1,1]$ and $\xi\leq \eta$. Moreover, $\Phi$ is
  open-convex-valued precisely when $\xi<\eta$ and
  $\Phi(x)=\big(\xi(x),\eta(x)\big)$, $x\in X$. Furthermore, if $\Phi$
  is open-convex-valued, $p\in V\subset X$ and $s,t\in\R$ with
  $\xi(p)<s<t<\eta(p)$, then we have that $V\times [s,t]\subset \Phi$
  if and only if $\xi(x)< s<t<\eta(x)$ for every $x\in V$. The proof
  is complete.
\end{proof}

In view of Theorem \ref{theorem-Open-Rel-v4:2} and Proposition
\ref{proposition-Count-Par-v2:1}, we now have that the
Dowker-Kat\v{e}tov insertion property in Theorem
\ref{theorem-Count-Par-v1:1} and the selection one in Theorem
\ref{theorem-Count-Par-v1:2} are essentially equivalent to the
following selection property for open relations.

\begin{theorem}
  \label{theorem-Count-Par-v14:1}
  For a space $X$, the following are equivalent\textup{:}
  \begin{enumerate}
  \item\label{item:OG-Paracompact-v21:1} $X$ is countably
    paracompact and normal.
  \item\label{item:OG-Paracompact-v21:2} If $E$ is a separable
    topological vector space, then each convex-valued open relation
    $\Phi:X\ssmap E$ has a continuous selection.
  \item\label{item:OG-Paracompact-v21:3} Every convex-valued open
    relation $\Phi:X\ssmap \R$ has a continuous selection.
  \end{enumerate}
\end{theorem}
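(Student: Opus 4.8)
The plan is to assemble the three facts already proved in the excerpt into a single cycle of implications, exploiting that countable paracompactness is precisely $\omega$-paracompactness and that a \emph{separable} space $E$ is exactly one with $d(E)\le\omega$.

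First I would settle $\ref{item:OG-Paracompact-v21:1}\Rightarrow\ref{item:OG-Paracompact-v21:2}$. If $X$ is countably paracompact and normal, then it is $\omega$-paracompact and normal. A separable topological vector space $E$ satisfies $d(E)\le\omega$, so Theorem~\ref{theorem-Count-Par-v2:1}, applied with $\tau=\omega$, produces a continuous selection for every convex-valued open relation $\Phi:X\ssmap E$. The implication $\ref{item:OG-Paracompact-v21:2}\Rightarrow\ref{item:OG-Paracompact-v21:3}$ is then immediate, since $\R$ is itself a separable topological vector space (the rationals are dense), so the instance $E=\R$ of \ref{item:OG-Paracompact-v21:2} is exactly \ref{item:OG-Paracompact-v21:3}.

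The substance lies in $\ref{item:OG-Paracompact-v21:3}\Rightarrow\ref{item:OG-Paracompact-v21:1}$, where the point is that the one-dimensional case already suffices. By Proposition~\ref{proposition-Count-Par-v2:1}, the selection property in \ref{item:OG-Paracompact-v21:3} is equivalent to the Dowker--Kat\v{e}tov insertion property, that is, to item~\ref{item:OG-Paracompact-v13:2} of Theorem~\ref{theorem-Count-Par-v1:1}. That theorem in turn identifies the insertion property with ``$X$ is countably paracompact and normal'', which is exactly \ref{item:OG-Paracompact-v21:1}. This closes the cycle.

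I do not expect a genuine obstacle, as the argument is pure assembly of prior results; the one thing to watch is that one cannot close the loop by invoking the converse direction of Theorem~\ref{theorem-Count-Par-v2:1} from \ref{item:OG-Paracompact-v21:3} alone, because that converse requires selections for \emph{all} separable $E$, whereas \ref{item:OG-Paracompact-v21:3} supplies only the case $E=\R$. Routing $\ref{item:OG-Paracompact-v21:3}\Rightarrow\ref{item:OG-Paracompact-v21:1}$ through Proposition~\ref{proposition-Count-Par-v2:1} and Theorem~\ref{theorem-Count-Par-v1:1} is precisely what lets the scalar case carry the weight, and this reduction is the conceptual content of the theorem.
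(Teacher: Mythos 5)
Your proposal is correct, and the first two implications coincide with the paper's proof (both are the case $\tau=\omega$ of Theorem~\ref{theorem-Count-Par-v2:1}, plus separability of $\R$). The difference is in the crucial implication \ref{item:OG-Paracompact-v21:3}$\implies$\ref{item:OG-Paracompact-v21:1}: you route it through Proposition~\ref{proposition-Count-Par-v2:1} and then invoke the classical Dowker--Kat\v{e}tov theorem (Theorem~\ref{theorem-Count-Par-v1:1}) as a black box, whereas the paper proves this implication directly. In the paper's argument, normality comes from Proposition~\ref{proposition-OG-Paracompact-v21:1}; for countable paracompactness, given an open cover $\Omega:X\ssmap \N$, one composes with the open relation $\Lambda=\{\langle k,t\rangle\in\N\times\R: k<t\}$ (using Proposition~\ref{proposition-Open-Rel-v4:1}) to get the convex-valued open relation $\Phi=\Lambda\circ\Omega$ with $\Phi(x)=(\min\Omega(x),+\infty)$, takes a continuous selection $f$ for $\Phi$, and checks that $\varphi=\Omega\cap\{\langle x,t\rangle: t<f(x)\}$ is a locally finite open indexed shrinking of $\Omega$ (on $V=f^{-1}\big((-\infty,t)\big)$ one has $\varphi[V]\subset[0,t]$, hence finite). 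What each route buys: yours is shorter and is indeed ``pure assembly,'' and the paper itself acknowledges this reduction in the remark preceding the theorem; but it makes the theorem logically dependent on the cited insertion theorem, which the paper never proves and which it explicitly aims to \emph{re-prove} --- the paper's direct argument, self-contained modulo Theorem~\ref{theorem-Count-Par-v2:1}, is exactly what lets Theorem~\ref{theorem-Count-Par-v14:1} combined with Proposition~\ref{proposition-Count-Par-v2:1} serve as a new proof of the Dowker--Kat\v{e}tov equivalence \ref{item:OG-Paracompact-v13:1}$\iff$\ref{item:OG-Paracompact-v13:2}. With your route that claimed alternative proof would become circular, so within the paper's architecture your argument is valid as a proof of the statement but cannot play the role the paper assigns to it.
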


\begin{proof}
  The implications
  \ref{item:OG-Paracompact-v21:1}$\implies
  $\ref{item:OG-Paracompact-v21:2}$\implies
  $\ref{item:OG-Paracompact-v21:3} are the special case of
  $\tau=\omega$ in Theorem \ref{theorem-Count-Par-v2:1}. If $X$ is as
  in \ref{item:OG-Paracompact-v21:3}, by Proposition
  \ref{proposition-OG-Paracompact-v21:1}, it is normal.  To show that
  it is also countably paracompact, take an open cover
  $\Omega:X\ssmap \N$ of $X$. Next, using Proposition
  \ref{proposition-Open-Rel-v4:1} and the open relation
  $\Lambda=\{\langle k,t\rangle\in \N\times\R: k<t\}$, define an open
  relation $\Phi:X\ssmap \R$ by $\Phi=\Lambda\circ \Omega$. Evidently,
  $\Phi(x)=(\min\Omega(x),+\infty)$ for each $x\in X$. Hence, by
  \ref{item:OG-Paracompact-v21:3}, $\Phi$ has a continuous selection
  $f:X\to \R$. Since $f$ is continuous, the relation
  $\Gamma=\{\langle x,t\rangle\in X\times\R: t<f(x)\}$ is open. Then
  the intersection $\varphi=\Omega\cap \Gamma$ is an open cover
  $\varphi:X\ssmap \N$ of $X$ because $\min\Omega(x)<f(x)$, for each
  $x\in X$.  Moreover, for $p\in X$ and $t\in \R$ with $f(p)<t$, the
  set $V=f^{-1}\big((-\infty,t)\big)$ is open, $p\in V$ and
  $\varphi[V]$ is finite because $\varphi[V]\subset [0,t]$. Thus, $V$
  intersects only finitely many members of the cover
  $\{\varphi^{-1}(n):n\in \N\}$. Hence, $\varphi:X\ssmap \N$ is a
  locally finite open cover of $X$ with $\varphi\subset
  \Omega$. Accordingly, $X$ is countably paracompact as well.
\end{proof}

We conclude this section with some remarks regarding the role of open
relations in Theorem \ref{theorem-Count-Par-v14:1}. In fact, we have
the following consequence of this theorem. 

\begin{corollary}
  \label{corollary-Open-Rel-vgg:1}
    For a space $X$, the following are equivalent\textup{:}
  \begin{enumerate}
  \item\label{item:Open-Rel-vgg:4} $X$ is countably
    paracompact and normal.
  \item\label{item:Open-Rel-vgg:5} Every open-convex-valued l.s.c.\
    relation $\Phi:X\ssmap \R$ has a continuous selection.
  \end{enumerate}
\end{corollary}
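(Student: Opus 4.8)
The plan is to show that, for relations into $\R$, the two hypotheses in the corollary describe exactly the same class of relations: a relation $\Phi:X\ssmap\R$ is open-convex-valued and l.s.c.\ if and only if it is a convex-valued open relation in $X\times\R$. Granting this, the corollary is immediate from the equivalence \ref{item:OG-Paracompact-v21:1}$\Leftrightarrow$\ref{item:OG-Paracompact-v21:3} in Theorem \ref{theorem-Count-Par-v14:1}, since then the selection property in \ref{item:Open-Rel-vgg:5} is literally the same as the one in \ref{item:OG-Paracompact-v21:3}.

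One inclusion is routine. Every convex-valued open relation $\Phi\subset X\times\R$ is l.s.c.\ and open-valued (each open relation is both l.s.c.\ and open-valued, as recorded after Theorem \ref{theorem-Count-Par-v2:1}), hence open-convex-valued and l.s.c. Thus the hypothesis \ref{item:Open-Rel-vgg:5} applies in particular to the subclass of relations occurring in \ref{item:OG-Paracompact-v21:3}, giving \ref{item:Open-Rel-vgg:5}$\Rightarrow$\ref{item:OG-Paracompact-v21:3} and therefore, by Theorem \ref{theorem-Count-Par-v14:1}, the implication \ref{item:Open-Rel-vgg:5}$\Rightarrow$\ref{item:Open-Rel-vgg:4}.

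The substance lies in the reverse inclusion of relation classes: every open-convex-valued l.s.c.\ relation $\Phi:X\ssmap\R$ is in fact open in $X\times\R$. The plan mirrors Proposition \ref{proposition-Count-Par-v2:1}. Since each $\Phi(x)$ is a nonempty open convex subset of $\R$, it is an open interval, and I would write $\Phi(x)=\big(\xi(x),\eta(x)\big)$ with $\xi(x)=\inf\Phi(x)\in[-\infty,+\infty)$ and $\eta(x)=\sup\Phi(x)\in(-\infty,+\infty]$. Lower semicontinuity of $\Phi$ forces $\{x:\eta(x)>a\}=\Phi^{-1}[(a,+\infty)]$ and $\{x:\xi(x)<b\}=\Phi^{-1}[(-\infty,b)]$ to be open for all $a,b\in\R$, so $\eta$ is lower semicontinuous and $\xi$ is upper semicontinuous as extended-real functions. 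For a point $\langle x_0,t_0\rangle\in\Phi$ I would then choose finite reals $s,t$ with $\xi(x_0)<s<t_0<t<\eta(x_0)$, which is possible because $t_0$ is real; then $V=\{x:\xi(x)<s\}\cap\{x:\eta(x)>t\}$ is an open neighbourhood of $x_0$ with $(s,t)\subset\Phi(x)$ for every $x\in V$, whence $V\times(s,t)\subset\Phi$ is a basic open box containing $\langle x_0,t_0\rangle$. Thus $\Phi$ is open.

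With this established, assuming $X$ is countably paracompact and normal, Theorem \ref{theorem-Count-Par-v14:1} supplies a continuous selection for any convex-valued open relation into $\R$, in particular for the now-open relation $\Phi$, which completes \ref{item:Open-Rel-vgg:4}$\Rightarrow$\ref{item:Open-Rel-vgg:5}. The only delicate point is the bookkeeping with the possibly infinite values of $\xi$ and $\eta$, but this is harmless since $t_0$ is finite and the box above uses only finite $s,t$. I would stress that this collapse of ``open-valued plus l.s.c.'' down to ``open relation'' is special to the range $\R$, where open convex sets are intervals; it fails in higher dimensions, as Michael's \cite[Example 6.3]{michael:56a} shows, which is precisely why \ref{item:OG-Paracompact-v21:3} of Theorem \ref{theorem-Count-Par-v14:1} had to be phrased for open relations rather than for merely open-convex-valued l.s.c.\ ones.
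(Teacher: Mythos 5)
Your proposal is correct and follows essentially the same route as the paper: both reduce the corollary to Theorem \ref{theorem-Count-Par-v14:1} by showing that an open-convex-valued l.s.c.\ relation $\Phi:X\ssmap\R$ is automatically an open relation, using lower semicontinuity on the rays $(-\infty,s)$ and $(t,+\infty)$ together with convexity to produce an open box around each point of $\Phi$. Your bookkeeping via the endpoint functions $\xi=\inf\Phi$ and $\eta=\sup\Phi$ is just a reformulation of the paper's argument (your set $V$ is literally the paper's $\Phi^{-1}\left[(-\infty,s)\right]\cap \Phi^{-1}\left[(t,+\infty)\right]$), so the two proofs coincide in substance.
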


\begin{proof}
  By Theorem \ref{theorem-Count-Par-v14:1}, it suffices to show that
  each open-convex-valued l.s.c.\ relation $\Phi:X\ssmap \R$ is an
  open relation. To this end, as in the proof of Proposition
  \ref{proposition-Count-Par-v2:1}, take $p\in X$ and $s,t\in \Phi(p)$
  with $s<t$. Then $(-\infty,s)\cap \Phi(p)\neq \emptyset$ because
  $\Phi(p)$ is open. Similarly,
  $(t,+\infty)\cap \Phi(p)\neq \emptyset$. Hence,
  $V=\Phi^{-1}\left[(-\infty,s)\right]\cap
  \Phi^{-1}\left[(t,+\infty)\right]$ is an open set with $p\in
  V$. Moreover, $V\times[s,t]\subset \Phi$ because $\Phi$ is
  convex-valued.
\end{proof}

Corollary \ref{corollary-Open-Rel-vgg:1} is complementary to Michael's
result in \cite[Theorem 3.1$'''$]{michael:56a} that a space $X$ is
perfectly normal precisely when each convex-valued l.s.c.\ relation
$\Phi:X\ssmap \R$ has a continuous selection. In particular, the
condition in Corollary \ref{corollary-Open-Rel-vgg:1} that $\Phi$ is
open-valued can not be dropped.

\section{Products and Open Relations}
\label{sec:selections-products}

The following generalisation of the equivalence
\ref{item:OG-Paracompact-v13:1}$\iff$\ref{item:OG-Paracompact-v13:3}
in Theorem \ref{theorem-Count-Par-v1:1} was established by Morita
\cite[Theorem 2.4]{morita:60}.

\begin{theorem}[\cite{morita:60}]
  \label{theorem-Open-Graph-v1:1}
  A space $X$ is $\tau$-paracompact and normal if and only if
  the topological product $X\times[0,1]^\tau$ is normal.
\end{theorem}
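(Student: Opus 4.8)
The plan is to read Morita's theorem through the selection characterisation of Theorem \ref{theorem-Count-Par-v2:1}, treating the cube $[0,1]^\tau$ as a convex compact subset of the Banach space $E=C([0,1]^\tau)$ of continuous real functions on $[0,1]^\tau$ with the sup-norm. Here $d(E)=w([0,1]^\tau)=\tau$, so Theorem \ref{theorem-Count-Par-v2:1} applies with this $E$, and I would prove the two implications separately.

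First I would show that if $X$ is $\tau$-paracompact and normal, then $X\times[0,1]^\tau$ is normal. Given disjoint closed sets $F_0,F_1\subset X\times[0,1]^\tau$, write $(F_i)_x=\{z:\langle x,z\rangle\in F_i\}$ for the (compact) slices and define a relation $\Phi:X\ssmap C([0,1]^\tau)$ by
\[
\Phi(x)=\Big\{g\in C([0,1]^\tau): g<\tfrac14\ \text{on}\ (F_0)_x\ \text{and}\ g>\tfrac34\ \text{on}\ (F_1)_x\Big\}.
\]
Each $\Phi(x)$ is nonempty, by Urysohn's lemma on the compact normal cube, and convex. The relation is open: if $\langle x_0,g_0\rangle\in\Phi$, then compactness of the slices yields a gap, so $g_0<\tfrac14$ on an open set $U_0\supset(F_0)_{x_0}$ and $g_0>\tfrac34$ on an open set $U_1\supset(F_1)_{x_0}$; since $F_0,F_1$ are closed and the cube is compact, the tube lemma makes $V=\{x:(F_0)_x\subset U_0,\ (F_1)_x\subset U_1\}$ open, and a small sup-norm ball around $g_0$ preserves the strict inequalities, so $V\times\mathbf{O}_\varepsilon(g_0)\subset\Phi$. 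By Theorem \ref{theorem-Count-Par-v2:1} there is a continuous selection $x\mapsto u_x$; then $u(x,z)=u_x(z)$ is jointly continuous, and truncating it to a Urysohn function separates $F_0$ from $F_1$. Hence the product is normal.

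For the converse, assume $X\times[0,1]^\tau$ is normal. Then $X$ is normal, being homeomorphic to the closed subspace $X\times\{\mathbf 0\}$. To see that $X$ is also $\tau$-paracompact I would invoke Mack's theorem \cite{MR0211382}, exactly as in the proof of Theorem \ref{theorem-Count-Par-v2:1}, and so reduce to refining an arbitrary increasing open cover $\Omega:X\ssmap\tau$, whose complements $C_\alpha=X\setminus\Omega^{-1}(\alpha)$ form a decreasing family of closed sets with empty intersection. Using the coordinates of $[0,1]^\tau$ indexed by $\tau$, I would encode this family as a pair of disjoint closed sets $A,B\subset X\times[0,1]^\tau$, separate them by a continuous $u:X\times[0,1]^\tau\to[0,1]$ produced by normality, and then read off the $\alpha$-indexed slices and coordinate restrictions of $u$ as a locally finite family of cozero sets refining $\Omega$; Mack's theorem then delivers $\tau$-paracompactness.

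The main obstacle is precisely this encoding step in the converse. Since each $\Omega^{-1}(\alpha)$ is merely open, naive coordinate conditions such as ``$z_\alpha=0$ whenever $x\in C_\alpha$'' fail to define closed subsets of the product; the delicate point, which is the technical heart of Morita's argument, is to arrange $A$ and $B$ so that they are genuinely closed (exploiting that the $C_\alpha$ are closed and decreasing and that each compact factor supplies a clopen ``level''), while still ensuring that the decoded refinement is locally finite. The forward direction, by contrast, becomes immediate once the relation $\Phi$ above is recognised as open and convex-valued, which is the chief advantage of working with $E=C([0,1]^\tau)$ and Theorem \ref{theorem-Count-Par-v2:1}.
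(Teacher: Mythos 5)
Your forward direction is sound and is essentially the paper's own argument (Theorem \ref{theorem-Open-Rel-vgg-2nd:1}): pass to the function space of the compact factor, check that the induced relation into $C(Y)$ is open and convex-valued, apply Theorem \ref{theorem-Count-Par-v2:1}, and decode through the exponential law. One repair is needed in your openness check: with $U_0=g_0^{-1}\big((-\infty,\tfrac14)\big)$, a sup-norm perturbation of $g_0$ need not stay below $\tfrac14$ on all of $U_0$, so the compactness ``gap'' must be built into $U_0$ itself --- choose $\varepsilon>0$ with $g_0\leq\tfrac14-3\varepsilon$ on $(F_0)_{x_0}$, set $U_0=g_0^{-1}\big((-\infty,\tfrac14-2\varepsilon)\big)$, and similarly for $U_1$; only then does $V\times\mathbf{O}_\varepsilon(g_0)\subset\Phi$ hold as claimed. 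This is routine.

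The converse, however, contains a genuine gap, and you flag it yourself: you never construct the disjoint closed sets $A,B\subset X\times[0,1]^\tau$ that are supposed to encode the increasing cover $\Omega$, and you correctly observe that naive coordinatewise conditions fail to define closed sets because the $\Omega^{-1}(\alpha)$ are merely open. Everything after that point (separating $A$ from $B$ and reading off a locally finite refinement) is therefore unsupported, and this is not a detail --- it is the entire content of the implication. The paper avoids the encoding altogether by a different device (Theorem \ref{theorem-prod-dis-v6:2}). Since the ordinal space $\tau+1$ is compact of weight $\tau$, it embeds as a closed subspace of $[0,1]^\tau$, so normality of $X\times[0,1]^\tau$ gives normality of $X\times(\tau+1)$. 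Given an increasing open cover $\Omega:X\ssmap\mathscr{I}_\tau$ indexed by the isolated points of $\tau$ (which suffices, by Mack's theorem), its pointwise closure $\overline{\Omega}:X\ssmap\tau+1$ is an \emph{open} relation in $X\times(\tau+1)$ (Proposition \ref{proposition-prod-dis-v6:5}) containing the closed set $X\times\{\tau\}$. Normality then yields a single Urysohn function $\xi:X\times(\tau+1)\to[0,1]$ with $X\times\{\tau\}\subset\xi^{-1}(1)$ and $\coz(\xi)\subset\overline{\Omega}$, and the refinement problem is transferred along the continuous map $\hat{\xi}:X\to C(\tau+1)$ to the image $Z=\hat{\xi}(X)$, which is metrizable and hence paracompact: the open cover $\coz(i)\cap\big(Z\times\mathscr{I}_\tau\big)$ of $Z$ has a locally finite open shrinking $\varphi$, and $\varphi\circ\hat{\xi}$ is then a locally finite open indexed refinement of $\Omega$, so Mack's theorem gives $\tau$-paracompactness. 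This passage to a metrizable image, where local finiteness comes for free, is exactly the missing content of your ``encoding step,'' and without it (or some substitute) your proof of the converse does not go through.
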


Here, we will show that Theorem \ref{theorem-Open-Graph-v1:1} is a
special selection problem for open relations and follows easily from
Theorems \ref{theorem-Count-Par-v2:1} and
\ref{theorem-Count-Par-v14:1}.  This will be based on the exponential
law that
$Z^{X\times Y}\ni g\longleftrightarrow \bm\hat{g}\in
\left(Z^Y\right)^X$, see Dugundji \cite{dugundji:66}. Namely, for each
$g:X\times Y\to Z$, the formula $\bm\hat{g}[x](y) =g(x,y)$ defines a
family of maps $\bm\hat{g}[x]:Y\to Z$ indexed by the elements of
$X$. Thus, $x\to \bm\hat{g}[x]$ is a map $\bm\hat{g}:X\to
Z^Y$. Conversely, given $\bm\hat{g}:X\to Z^Y$, the same formula
defines a map $g:X\times Y\to Z$.\medskip

For a compact space $Y$, we will use $C(Y)$ for the vector space of
all continuous functions $f:Y\to \R$ equipped with the sup-norm
$\|f\|=\sup_{y\in Y}|f(y)|$, equivalently with the compact open
topology. The following property is well known (see, for instance,
\cite[Theorems 3.1 in Chapter XII]{dugundji:66}):
\begin{equation}
  \label{eq:Open-Rel-vgg-fd:1}
  g:X\times Y\to \R\ \text{is
  continuous if and only if so is}\ \bm\hat{g}:X\to C(Y).
\end{equation}

We can now apply Theorems \ref{theorem-Count-Par-v2:1} and
\ref{theorem-Count-Par-v14:1} to get the following simple proof of
\cite[Theorem 2.2]{morita:60}.

\begin{theorem}
  \label{theorem-Open-Rel-vgg-2nd:1}
  Let $X$ be a $\tau$-paracompact normal space, and $Y$ be a compact
  space with $w(Y)\leq \tau$. Then $X\times Y$ is also normal.
\end{theorem}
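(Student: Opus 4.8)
The plan is to convert the normality of $X\times Y$ into a selection problem for a convex-valued open relation from $X$ into the sup-normed Banach space $C(Y)$, and then apply the implication \ref{item:OG-Paracompact-v20:1}$\implies$\ref{item:OG-Paracompact-v20:2} of Theorem \ref{theorem-Count-Par-v2:1}. The point is that, since $Y$ is compact with $w(Y)\leq\tau$, one has $d\big(C(Y)\big)\leq\tau$ (for instance, by the Stone--Weierstrass theorem the rational subalgebra generated by a family of $\leq\tau$ continuous functions separating the points of $Y$ is dense in $C(Y)$), so $C(Y)$ is an admissible range for Theorem \ref{theorem-Count-Par-v2:1}.

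First I would fix disjoint closed sets $A,B\subset X\times Y$ and, for each $x\in X$, pass to the compact slices $A_x=\{y\in Y:\langle x,y\rangle\in A\}$ and $B_x=\{y\in Y:\langle x,y\rangle\in B\}$, which are disjoint and closed in $Y$. I would then define $\Phi:X\ssmap C(Y)$ by
\[
\Phi(x)=\big\{h\in C(Y): h(y)<0\ \text{for every}\ y\in A_x,\ \text{and}\ h(y)>1\ \text{for every}\ y\in B_x\big\}.
\]
Each $\Phi(x)$ is nonempty because $Y$ is compact, hence normal, and $A_x,B_x$ are disjoint compact sets, so Urysohn's lemma supplies a separating function; it is convex because both defining families of inequalities are preserved under convex combinations; and it is open in $C(Y)$ because $A_x,B_x$ are compact, so any $h\in\Phi(x)$ satisfies $h\leq-\varepsilon$ on $A_x$ and $h\geq 1+\varepsilon$ on $B_x$ for some $\varepsilon>0$, whence $\mathbf{O}_\varepsilon(h)\subset\Phi(x)$.

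The crux, and the step I expect to be the main obstacle, is to verify that $\Phi$ is an open relation in $X\times C(Y)$, that is, jointly open rather than merely open-valued. Given $\langle x_0,h_0\rangle\in\Phi$, I would pick $\varepsilon>0$ with $h_0<-2\varepsilon$ on $A_{x_0}$ and $h_0>1+2\varepsilon$ on $B_{x_0}$, and set $G_A=\{y\in Y:h_0(y)<-\varepsilon\}$ and $G_B=\{y\in Y:h_0(y)>1+\varepsilon\}$, two open subsets of $Y$ containing $A_{x_0}$ and $B_{x_0}$ respectively. Since $Y\setminus G_A$ is compact, the projection $X\times(Y\setminus G_A)\to X$ is closed, so $\{x\in X: A_x\cap(Y\setminus G_A)\neq\emptyset\}$ is closed in $X$ and misses $x_0$; this gives an open neighbourhood of $x_0$ on which $A_x\subset G_A$, and symmetrically for $B$. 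Intersecting them yields an open set $U\ni x_0$ with $A_x\subset G_A$ and $B_x\subset G_B$ for all $x\in U$, and then a direct estimate gives $U\times\mathbf{O}_\varepsilon(h_0)\subset\Phi$: for $x\in U$, $h\in\mathbf{O}_\varepsilon(h_0)$ and $y\in A_x\subset G_A$ one has $h(y)<h_0(y)+\varepsilon<0$, and symmetrically $h(y)>1$ for $y\in B_x$.

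Finally, Theorem \ref{theorem-Count-Par-v2:1} provides a continuous selection for $\Phi:X\ssmap C(Y)$, which by the exponential law \eqref{eq:Open-Rel-vgg-fd:1} has the form $\bm\hat{f}$ for a continuous map $f:X\times Y\to\R$ with $f(x,y)=\bm\hat{f}[x](y)$. Since $\bm\hat{f}[x]\in\Phi(x)$ for every $x$, by construction $f<0$ on $A$ and $f>1$ on $B$, so $f^{-1}\big((-\infty,0)\big)$ and $f^{-1}\big((1,+\infty)\big)$ are disjoint open sets separating $A$ and $B$. Hence $X\times Y$ is normal.
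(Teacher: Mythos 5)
Your proof is correct, and its engine is the same as the paper's: both reduce normality of $X\times Y$ to a selection problem for a convex-valued open relation from $X$ into $C(Y)$, use $d(C(Y))=w(Y)\leq\tau$ to make Theorem \ref{theorem-Count-Par-v2:1} applicable, and transfer the resulting selection back through the exponential law \eqref{eq:Open-Rel-vgg-fd:1}. The difference is one of packaging. The paper starts from an \emph{arbitrary} convex-valued open relation $\Phi:X\times Y\ssmap \R$ and shows that
\[
\Psi[x]=\big\{f\in C(Y): f(y)\in \Phi(x,y)\ \text{for every } y\in Y\big\}
\]
is a convex-valued open relation; nonemptiness of the values $\Psi[x]$ is obtained by applying Theorem \ref{theorem-Count-Par-v14:1} to the compact factor $Y$, and normality of $X\times Y$ then follows by citing Proposition \ref{proposition-OG-Paracompact-v21:1}. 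You instead fix the disjoint closed sets $A,B$ at the outset and take for $\Phi(x)$ the set of functions strictly separating the compact slices $A_x$ and $B_x$; this is, in effect, the paper's $\Psi$ evaluated at the particular separating relation used in the proof of Proposition \ref{proposition-OG-Paracompact-v21:1}, so Urysohn's lemma (with a trivial rescaling) suffices for nonemptiness and no appeal to Theorem \ref{theorem-Count-Par-v14:1} or Proposition \ref{proposition-OG-Paracompact-v21:1} is needed. What the paper's extra generality buys is a stronger intermediate result --- every convex-valued open relation $X\times Y\ssmap\R$ admits a continuous selection, which is more than normality; what your specialization buys is a more self-contained argument. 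Your verification of joint openness via the closed-projection argument is sound and is, in substance, the same compactness ("tube") argument the paper runs for the compact set $\{p\}\times f$ inside the open set $\Phi$.
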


\begin{proof}
  Take a convex-valued open relation $\Phi:X\times Y\ssmap \R$, and
  define another relation $\Psi\subset X\times C(Y)$ by
  \[
    \Psi[x]=\big\{f\in C(Y): f(y)\in \Phi(x,y),\ \text{for every
      $y\in Y$}\big\}.
  \]
  According to Theorem \ref{theorem-Count-Par-v14:1},
  $\Psi:X\ssmap C(Y)$ and is clearly convex-valued. It is also
  open. Indeed, if $p\in X$ and $f\in \Psi[p]$, then $\{p\}\times f$
  is a compact subset of the open set $\Phi\subset X\times
  Y\times\R$. Hence, there is an open set $V\subset X$ and $\delta>0$
  such that $p\in V$ and $V\times\mathbf{O}_\delta[f]\subset
  \Psi$. Since $d(C(Y))=w(Y)\leq \tau$ and $X$ is $\tau$-paracompact
  and normal, it follows from Theorem \ref{theorem-Count-Par-v2:1}
  that $\Psi$ has a continuous selection $\bm\hat{g}:X\to C(Y)$. Then
  by \eqref{eq:Open-Rel-vgg-fd:1}, the associated function
  $g:X\times Y\to \R$ is also continuous, and obviously it is a
  selection for $\Phi$. Therefore, by Proposition
  \ref{proposition-OG-Paracompact-v21:1}, $X\times Y$ is normal.
\end{proof}

In a similar way, we have the inverse implication of Theorem
\ref{theorem-Open-Rel-vgg-2nd:1}, in fact the complete proof of
Theorem \ref{theorem-Open-Graph-v1:1}. To this end, for an ordered set
$(\mathscr{A},\leq)$, let us recall that a cover
$\Omega:X\ssmap \mathscr{A}$ of $X$ is increasing if
$\Omega^{-1}(\alpha)\subset \Omega^{-1}(\beta)$ for every
$\alpha,\beta\in\mathscr{A}$ with $\alpha<\beta$. This is equivalent
to the property that $[\alpha,\to)\subset \Omega(x)$ for every
$x\in X$ and $\alpha\in \Omega(x)$, where $[\alpha,\to)$ is the
interval of all $\beta\in\mathscr{A}$ with $\alpha\leq \beta$.
Ordinals are the natural range for increasing relations. As before,
each ordinal $\lambda$ will be equipped with the usual topology and
order as an ordinal space. Moreover, $\mathscr{I}_\lambda$ will stand
for the isolated points of $\lambda$.  In this setting, for a cardinal
number $\tau$, we have that $\mathscr{I}_\tau$ is a discrete space
with $|\mathscr{I}_\tau|=\tau$ which will play an interesting role
regarding $\tau$-paracompactness.\medskip

For a relation $\Phi: X\ssmap Y$ between topological spaces, the
\emph{pointwise closure} $\overline{\Phi}: X\ssmap Y$ of $\Phi$
is defined by $\overline{\Phi}(x)=\overline{\Phi(x)}$ for every
$x\in X$.

\begin{proposition}
  \label{proposition-prod-dis-v6:5}
  Let $\lambda$ be an infinite ordinal and
  $\Omega:X\ssmap {\mathscr{I}_\lambda}\subset \lambda$ be an
  increasing open cover of $X$. Then
  $\overline{\Omega}: X\ssmap \lambda$ is an increasing open relation.
\end{proposition}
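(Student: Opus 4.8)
The plan is to first pin down the pointwise structure of $\Omega$ and then compute $\overline{\Omega}$ explicitly. Since $\Omega:X\ssmap \mathscr{I}_\lambda$ is a cover, every image $\Omega(x)$ is a nonempty set of ordinals and therefore has a least element $\alpha(x)=\min\Omega(x)$; crucially $\alpha(x)\in\mathscr{I}_\lambda$, so $\alpha(x)$ is an isolated point of $\lambda$ (i.e.\ $0$ or a successor). Because $\Omega$ is increasing, its image $\Omega(x)$ is upward closed inside $\mathscr{I}_\lambda$, and combined with the existence of the minimum this gives $\Omega(x)=[\alpha(x),\to)\cap\mathscr{I}_\lambda$ for every $x\in X$.

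Next I would identify the pointwise closure, the claim being $\overline{\Omega}(x)=\overline{\Omega(x)}=[\alpha(x),\to)$. The inclusion $\overline{\Omega(x)}\subset[\alpha(x),\to)$ is immediate, since $[0,\alpha(x))$ is open and disjoint from $\Omega(x)$, so $[\alpha(x),\to)$ is a closed set containing $\Omega(x)$. For the reverse inclusion, take $\beta\geq\alpha(x)$: if $\beta$ is isolated then $\beta\in\Omega(x)$ directly; if $\beta$ is a limit ordinal then $\beta>\alpha(x)$ (as $\alpha(x)$ is isolated), and the successor ordinals in $(\alpha(x),\beta)$ are cofinal in $\beta$ and all lie in $\Omega(x)$, so $\beta\in\overline{\Omega(x)}$. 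Hence $\overline{\Omega}(x)=[\alpha(x),\to)$. With this description the increasing property becomes transparent: each $\overline{\Omega}(x)=[\alpha(x),\to)$ is an upper interval, so $\gamma\in\overline{\Omega}(x)$ and $\gamma<\delta<\lambda$ force $\delta\in\overline{\Omega}(x)$, which is exactly $\overline{\Omega}^{-1}(\gamma)\subset\overline{\Omega}^{-1}(\delta)$.

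It then remains to prove that $\overline{\Omega}$ is an open subset of $X\times\lambda$. Given $\langle x_0,\gamma_0\rangle\in\overline{\Omega}$, so $\gamma_0\geq\alpha(x_0)$, I would produce an open box around it. Put $V=\Omega^{-1}(\alpha(x_0))$, which is open in $X$ (a slice of the open relation $\Omega$, since $\mathscr{I}_\lambda$ is discrete) and contains $x_0$; for $x\in V$ one has $\alpha(x_0)\in\Omega(x)$, hence $\alpha(x)\leq\alpha(x_0)$. Put $W=[\alpha(x_0),\to)$, which contains $\gamma_0$ and is open in $\lambda$ precisely because $\alpha(x_0)$ is isolated (if $\alpha(x_0)=\zeta+1$ then $W=(\zeta,\to)$, and if $\alpha(x_0)=0$ then $W=\lambda$). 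Then for any $\langle x,\beta\rangle\in V\times W$ we get $\beta\geq\alpha(x_0)\geq\alpha(x)$, so $\beta\in\overline{\Omega}(x)$; thus $V\times W\subset\overline{\Omega}$, and $\overline{\Omega}$ is open.

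The one place where everything hinges is that $\Omega$ takes values among the \emph{isolated} points of $\lambda$: this is what makes $\alpha(x_0)$ isolated and therefore $[\alpha(x_0),\to)$ open, which is the crux of the box argument. Were $\Omega$ allowed to assume limit-ordinal values, $\min\Omega(x)$ could be a limit and $[\min\Omega(x),\to)$ would fail to be open, so openness of $\overline{\Omega}$ would break down. I expect this interplay between isolatedness of the minimal index and openness of the corresponding upper interval to be the main (and essentially the only genuinely nontrivial) step; the remaining verifications are the routine closure computation and the box construction.
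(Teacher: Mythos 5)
Your proof is correct and takes essentially the same route as the paper's: in both, the crux is that an isolated point $\beta\in\Omega(x_0)$ yields the open box $\Omega^{-1}(\beta)\times[\beta,\to)\subset\overline{\Omega}$, with openness of $[\beta,\to)$ coming precisely from $\beta$ being isolated. The only difference is organizational --- you first compute $\overline{\Omega}(x)=[\min\Omega(x),\to)$ globally and take $\beta=\min\Omega(x_0)$, whereas the paper argues locally, choosing for each $\langle p,\alpha\rangle\in\overline{\Omega}\setminus\Omega$ an arbitrary isolated $\beta<\alpha$ with $\beta\in\Omega(p)$.
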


\begin{proof}
  In this proof, the intervals $[\alpha,\to)$ are taken in $\lambda$.
  Evidently, $\overline{\Omega}$ is increasing because
  $\overline{[\alpha,\to)\cap \mathscr{I}_\lambda}=[\alpha,\to)$ for
  every $\alpha\in \mathscr{I}_\lambda$. To see that
  $\overline{\Omega}$ is open, take
  $\langle p,\alpha\rangle\in \overline{\Omega}\setminus \Omega$.
  Then $\alpha\in \overline{\Omega(p)}\setminus \Omega(p)$ and there
  is $\beta\in \mathscr{I}_\lambda$ with $\beta<\alpha$ and
  $\langle p,\beta\rangle \in \Omega$. Accordingly,
  $\langle p,\beta\rangle\in \Omega^{-1}(\beta)\times\{\beta\}\subset
  \Omega\subset\overline{\Omega}$ and $\Omega^{-1}(\beta)$ is open in
  $X$ because $\Omega$ is an open cover of $X$. Since
  $\beta\in \mathscr{I}_\lambda$ is an isolated point of $\lambda$ and
  $\overline{\Omega}$ is increasing, $\langle p,\alpha\rangle$ is
  contained in the open set
  $\Omega^{-1}(\beta)\times [\beta,\to)\subset \overline{\Omega}$.
\end{proof}

The following theorem gives a very simple proof of the other
implication of Theorem \ref{theorem-Open-Graph-v1:1}, compare with
\cite[Lemma 2.5]{morita:60} and the subsequent proof of the ``if''
part of \cite[Theorem 2.4]{morita:60}.

\begin{theorem}
  \label{theorem-prod-dis-v6:2}
  Let $X$ be a space such that $X\times(\tau+1)$ is normal for some
  infinite cardinal $\tau$. Then $X$ is $\tau$-paracompact.
\end{theorem}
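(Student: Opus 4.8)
The plan is to deduce $\tau$-paracompactness from the single selection property that was already shown, in the proof of Theorem \ref{theorem-Count-Par-v2:1}, to imply it, and to furnish the required selection directly from the normality of $X\times(\tau+1)$ through the exponential law. First I would record that $X$ itself is normal, since $X\times\{\tau\}$ is a closed copy of $X$ in the normal space $X\times(\tau+1)$ (the top point $\tau$ being closed in the ordinal space $\tau+1$). To prove $\tau$-paracompactness, exactly as in the proof of Theorem \ref{theorem-Count-Par-v2:1}, by Mack's theorem \cite[Theorem 5]{MR0211382} it suffices to refine an arbitrary \emph{increasing} open cover $\Omega:X\ssmap\tau$ by a locally finite open cover. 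Writing $U_\alpha=\Omega^{-1}(\alpha)$ (an increasing family of open sets covering $X$) and $\alpha(x)=\min\Omega(x)$, I would reuse the relation $\Phi:X\ssmap C_0(\tau)$ from that proof, namely $\Phi(x)=\{y\in C_0(\tau):\min_{\alpha\le\alpha(x)}y(\alpha)>2\}$, and observe that \emph{the whole remainder of the argument in Theorem \ref{theorem-Count-Par-v2:1} applies verbatim as soon as a continuous selection $f:X\to C_0(\tau)$ for $\Phi$ is produced}: it yields a locally finite open $\mathscr{V}$ with $\diam f(V)\le1$ refining $\Omega$. Thus the entire problem reduces to constructing one such selection.

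For this, I would set $W_\alpha=\bigcup_{\beta<\alpha}U_\beta$ (so $W_0=\emptyset$ and $W_\tau=X$) and consider, inside $X\times(\tau+1)$, the sets $P=\{(x,\alpha):\alpha<\tau,\ x\notin W_\alpha\}$ and $Q=X\times\{\tau\}$. These are disjoint (since $P\subset X\times\tau$), and I would apply Urysohn's lemma to obtain a continuous $g:X\times(\tau+1)\to[0,3]$ with $g\equiv3$ on $P$ and $g\equiv0$ on $Q$ — \emph{provided} both sets are closed ($Q$ obviously is).

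Granting this, the finish is routine: since $\tau+1$ is compact, the exponential law \eqref{eq:Open-Rel-vgg-fd:1} makes the transpose $\hat{g}:X\to C(\tau+1)$ continuous; as $g(x,\tau)=0$ we get $\hat{g}[x]\in C_0(\tau)$, so $\hat{g}:X\to C_0(\tau)$ is continuous. Finally, whenever $\alpha\le\alpha(x)$ one has $x\notin U_\beta$ for all $\beta<\alpha$, hence $x\notin W_\alpha$ and $(x,\alpha)\in P$, so $\hat{g}[x](\alpha)=3>2$; therefore $\hat{g}[x]\in\Phi(x)$ and $\hat{g}$ is the desired selection.

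The main obstacle is verifying that $P$ is closed in $X\times(\tau+1)$, and this is precisely where the choice of $\tau+1$ (rather than $\tau$) and the increasing nature of $\{U_\alpha\}$ matter. I would check it by taking $(x_0,\alpha_0)\notin P$ and exhibiting a basic neighbourhood missing $P$: at the top point ($\alpha_0=\tau$) and at a limit $\alpha_0$ one uses that $x_0\in U_{\beta_1}$ for some $\beta_1<\alpha_0$ together with the box $U_{\beta_1}\times(\beta_1,\alpha_0]$, whose points $(x',\gamma)$ satisfy $x'\in U_{\beta_1}\subset W_\gamma$; at an isolated $\alpha_0$ the box $W_{\alpha_0}\times\{\alpha_0\}$ works, $W_{\alpha_0}$ being open. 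In every case $x'\in W_\gamma$ forces $(x',\gamma)\notin P$, so $P$ is closed and the construction goes through.
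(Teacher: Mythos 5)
Your proof is correct, and its skeleton coincides with the paper's: reduction via Mack's theorem to an increasing open cover, a Urysohn function on the normal product $X\times(\tau+1)$ separating $X\times\{\tau\}$ from the closed set where the cover has not yet taken effect, and the exponential law \eqref{eq:Open-Rel-vgg-fd:1} to convert that function into a continuous map into a metrizable function space. The organisation, however, differs in two genuine ways. First, your verification that $P$ is closed is an inline re-proof of Proposition \ref{proposition-prod-dis-v6:5}: the complement of $P$ in $X\times(\tau+1)$ is exactly the strict variant $\{\langle x,\alpha\rangle:\alpha(x)<\alpha\leq\tau\}$ of the increasing open relation $\overline{\Omega}=\{\langle x,\alpha\rangle:\alpha(x)\leq\alpha\leq\tau\}$, and your three-case box argument (top point, limit, isolated) is the same kind of argument used to prove that proposition. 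Second, and more substantively, the endgame is different: the paper stays in $C(\tau+1)$, passes to the metrizable image $Z=\bm\hat{\xi}(X)$, builds from the cozero set of the evaluation map the open cover $\Phi=\Psi\cap Z\times\mathscr{I}_\tau$ of $Z$, takes a locally finite indexed refinement there, and pulls it back through $\bm\hat{\xi}$; you instead push into $C_0(\tau)$ and recognise the transpose as a continuous selection for the relation $\Phi$ of \eqref{eq:OG-Paracompact-v21:1}, whereupon the ball/diameter argument from the converse half of Theorem \ref{theorem-Count-Par-v2:1} — which, as you correctly note, uses only continuity of $f$ and $f(x)\in\Phi(x)$, not openness or convexity of $\Phi$ — produces the refinement. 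Your route makes Theorem \ref{theorem-prod-dis-v6:2} visibly a corollary of the selection machinery of Section \ref{sec:select-open-graph} and dispenses with Proposition \ref{proposition-prod-dis-v6:5} as a separate statement; the paper's route keeps Section \ref{sec:selections-products} self-contained and isolates a reusable fact about increasing open covers. Two steps you wave through are indeed routine but deserve a line: continuity of $\hat{g}:X\to C_0(\tau)$ holds because restriction to $\tau$ is an isometric isomorphism of $\{y\in C(\tau+1):y(\tau)=0\}$ onto $C_0(\tau)$, and the existence of $\alpha<\tau$ with $|f(x)(\alpha)|<1$ in the recycled argument uses that the infinite cardinal $\tau$ is not compact as an ordinal space. (Also, your opening observation that $X$ is normal is true but never used: Urysohn's lemma is applied in the product, not in $X$.)
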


\begin{proof}
  Take an increasing open cover $\Omega:X\ssmap \mathscr{I}_\tau$ of
  $X$. Then $\mathscr{I}_{\tau+1}=\mathscr{I}_\tau$ and by Proposition
  \ref{proposition-prod-dis-v6:5}, the pointwise closure
  $\Lambda=\overline{\Omega}:X\ssmap \tau+1$ of
  $\Omega:X\ssmap \mathscr{I}_{\tau+1}$ remains open and increasing in
  $X\times(\tau+1)$. Moreover, $\Lambda$ contains the closed set
  $X\times\{\tau\}$ because
  $X=\bigcup_{\alpha\in \mathscr{I}_{\tau}}
  \Omega^{-1}(\alpha)\subset \Lambda^{-1}(\tau)$.  Since
  $X\times(\tau+1)$ is normal, there exists a continuous function
  $\xi:X\times(\tau+1)\to [0,1]$ such that
  $X\times\{\tau\}\subset \xi^{-1}(1)$ and $\coz(\xi)\subset
  \Lambda$. Then by \eqref{eq:Open-Rel-vgg-fd:1}, the associated
  map $\bm\hat{\xi}:X\to C(\tau+1)$ is continuous. Let
  $Z=\bm\hat{\xi}(X)$ and
  $\bm\hat{\imath}=\bm\hat{\imath}_Z:Z\hookrightarrow C(\tau+1)$ be
  the natural inclusion map $\bm\hat{\imath}(z)=z$, $z\in Z$. By the
  same token, the associated map $i:Z\times(\tau+1)\to \R$
  corresponding to $\bm\hat{\imath}:Z\hookrightarrow C(\tau+1)$ is
  also continuous. In fact, we have the following commutative
  diagrams, where $1_{\tau+1}$ is the identity of $\tau+1$.
  \begin{center}
    \begin{tikzcd}[row sep=large]
     X \arrow[d, "\bm\hat{\xi}"] \arrow[rd, "\bm\hat{\xi}"]& \\
      {Z} \arrow[r, hook, "\bm\hat{\imath}"] & C(\tau+1)
    \end{tikzcd}\quad$\implies$ \quad
      \begin{tikzcd}[row sep=large]
     X\times(\tau+1) \arrow[d, "\bm\hat{\xi}\times 1_{\tau+1}"]
     \arrow[rd, "\xi"]& \\ 
      {Z}\times(\tau+1) \arrow[r,  "i"] & {[0,1]}
    \end{tikzcd}  
  \end{center}
  
  Accordingly,
  $X\times\{\tau\}\subset \coz(\xi)=\left[\bm\hat{\xi}\times
    1_{\tau+1}\right]^{-1}(\coz(i))\subset \Lambda=\overline{\Omega}$
  and, therefore, $\Psi=\coz(i):Z\ssmap \tau+1$ is an open relation
  with $\Psi\circ \bm\hat{\xi}\subset \Lambda=\overline{\Omega}$.
  Since $Z\times \mathscr{I}_\tau$ is dense in $Z\times (\tau+1)$, we
  can define another open relation $\Phi:Z\ssmap \mathscr{I}_\tau$ by
  $\Phi= \Psi\cap Z\times\mathscr{I}_\tau$. The main advantage of this
  relation is the property that $\Phi\circ \bm\hat{\xi}\subset \Omega$
  because $\Psi\circ \bm\hat{\xi}\subset \overline{\Omega}$. We can
  now use that $Z$ is paracompact (being metrizable) and
  $\Phi:Z\ssmap \mathscr{I}_\tau$ is an open cover of $Z$, to get an
  open locally finite cover $\varphi:Z\ssmap \mathscr{I}_\tau$ of $Z$
  with $\varphi\subset \Phi$. Finally, since $\bm\hat{\xi}$ is
  continuous, $\varphi\circ \bm\hat{\xi}:X\ssmap \mathscr{I}_\tau$ is
  an open locally finite cover of $X$ such that
  $\varphi\circ \bm\hat{\xi}\subset \Phi\circ \bm\hat{\xi}\subset
  \Omega$. According to \cite[Theorem 5]{MR0211382}, $X$ is
  $\tau$-paracompact.
\end{proof}

\end{document}